\documentclass[letterpaper,notitlepage,12]{article}
\usepackage{setspace}
\usepackage{sectsty}
\usepackage{makeidx}
\usepackage{titling}
\usepackage{stmaryrd}
\usepackage{latexsym}
\usepackage{amsbsy}
\usepackage{amssymb}
\usepackage{verbatim}
\usepackage{amsthm}
\usepackage{amsfonts}
\usepackage{amsmath}
\usepackage{graphicx}
\usepackage{layout}
\usepackage{pb-diagram}
\usepackage{amscd}
\usepackage{anysize}
\usepackage{tikz}
\usepackage{etoolbox}
\usepackage{graphicx}
\usepackage[all,cmtip]{xy}
\usepackage{caption}
\usepackage{hyperref}
\newcommand{\cita}{\textquotedblleft}

\newcommand{\Z}{\mathbb{Z}}

\newcommand{\R}{\mathbb{R}}

\newcommand{\D}{\mathbb{D}}

\newcommand{\nbeq}{\begin{equation}}
\newcommand{\neeq}{\end{equation}}
\newcommand{\beq}{\begin{equation*}}
\newcommand{\eeq}{\end{equation*}}
\theoremstyle{definition}
\newtheorem{thm}{Theorem}[section]
\newtheorem{lemma}[thm]{Lemma}
\newtheorem{defin}[thm]{Definition}
\newtheorem{propo}[thm]{Proposition}
\newtheorem{coro}[thm]{Corollary}
\newtheorem{rmk}{Remark}
\newtheorem{add}{Addendum}

\newtheorem*{claim}{Claim}

\newcommand{\bproof}{\begin{proof}}
\newcommand{\eproof}{\end{proof}}
\marginsize{3cm}{3cm}{3cm}{3cm} 
\sectionfont{\small}
\allsectionsfont{\centering}
\sectionfont{\fontsize{11}{15}\selectfont}
\title{\Large\bf On the topology of the space of pinched negatively curved
metrics with finite volume and identical ends}\setlength{\droptitle}{-3cm}
\author{Mauricio Bustamante}
\newcommand{\Addresses}{{
  \bigskip
  \footnotesize
  \textsc{Mauricio Bustamante}, \textsc{Department of Mathematical Sciences, Binghamton University, Binghamton, NY}\par\nopagebreak
 \texttt{bustamante.math@gmail.com}
  }}
\date{}
\begin{document}
\maketitle
\vspace{-1.2cm}
\begin{abstract}
We prove that the space of complete, finite volume,
pinched, negatively curved Riemannian metrics on a smooth high-dimensional
manifold is either empty or it is highly non-connected, provided their behavior at infinity is similar.
\end{abstract}
\section{Introduction}
\par Let $M$ be a connected, noncompact smooth manifold. 
We denote by $MET(M)$ the space of all Riemannian metrics on $M$, 
endowed with the compact-open $C^{\infty}$-topology. We let
$MET^{<0}(M)$ denote the subspace of $MET(M)$ consisting of all 
complete Riemannian metrics on $M$ with finite volume and whose sectional curvatures are all bounded by two
negative numbers (in that case we say that a metric
is pinched negatively curved).

Before we state our main theorem we recall the definition
of an \textit{end} of a noncompact space.
\begin{defin}\label{end}
An \textit{end} of a noncompact space $X$ is a function $E$ that assigns to
each compact subset $K$ of $M$, a connected component $E(K)$ of $X-K$, subject
to the condition that if $K\subset K'$ then $E(K')\subset E(K)$.\\
An open set $U\subset X$ is a \textit{neighborhood of an end} 
$E$ if there exists 
a compact set $K$ in $X$ such that $E(K)\subset U$.
\end{defin}

Assume that $MET^{<0}(M)\neq\emptyset$. For a compact subset $K\subset M$,
and $g\in MET^{<0}(M)$, let $MET^{<0}_{K}(M,g)$ be the
subspace of $MET^{<0}(M)$ consisting of all Riemannian metrics
$g'\in MET^{<0}(M)$
such that $g'$ and $g$ \textit{agree} on $M-K$, i.e. the restriction of the 
identity map $id:M\to M$, to $M-K$
\beq
id|_{M-K}:(M-K,g|_{M-K})\to (M-K,g'|_{M-K}),
\eeq
is an isometry, where $g|_{M-K}$  denotes the 
restriction of $g$ to $M-K$.

Let 
$K_1\subset K_2\subset\cdots$ be a sequence of compact 
subsets covering $M$. It yields a sequence of subspaces
\beq
MET^{<0}_{K_1}(M,g)\subset MET^{<0}_{K_2}(M,g)\subset\cdots
\eeq

Its union $\displaystyle\bigcup_{i=1}^{\infty}MET^{<0}_{K_i}(M,g)$ is
given the direct limit topology and is denoted by
$MET_{\infty}^{<0}(M,g)$,
i.e. a subset $A$ in
$\displaystyle MET^{<0}_{\infty}(M,g)$
is closed if and only if $A\bigcap MET_{K_n}^{<0}(M,g)$ is closed in
$MET_{K_n}^{<0}(M,g)$ for each $n$.

It will also be useful to recall that a map $f:MET^{<0}_{\infty}(M,g)\to X$
from the direct limit $MET^{<0}_{\infty}(M,g)$ to a space $X$ is 
continuous if and only if for all $i=1,2,\ldots$, $f$ restricted to
$MET_{K_i}^{<0}(M,g)$ is continuous. (see \cite[p. 5, 18]{spanier}).
\begin{lemma}
The definition of $MET_{\infty}^{<0}(M,g)$ is independent of the choice
of the sequence of compact subsets covering $M$.
\end{lemma}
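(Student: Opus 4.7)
The plan is to reduce the statement to the standard fact that the topology of a direct limit along an increasing filtration of a fixed set depends only on the filtration up to mutual cofinal interleaving. Let $\{K_i\}$ and $\{L_j\}$ be two increasing sequences of compact subsets exhausting $M$, and write $X_i := MET^{<0}_{K_i}(M,g)$ and $Y_j := MET^{<0}_{L_j}(M,g)$, each endowed with the subspace topology inherited from $MET^{<0}(M)$.

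First I would verify that the two underlying sets coincide: a metric $g'$ lies in either union precisely when it agrees with $g$ outside some compact subset of $M$, and by the exhaustion assumption any such subset is eventually contained in some $K_i$ and in some $L_j$. Next I would establish mutual interleaving. For each $i$, the exhaustion property of $\{L_j\}$ produces an index $j(i)$ with $K_i \subset L_{j(i)}$; since $M - L_{j(i)} \subset M - K_i$, any metric that coincides with $g$ off $K_i$ automatically coincides with $g$ off $L_{j(i)}$, whence $X_i \subset Y_{j(i)}$. Symmetrically, each $Y_j$ sits inside some $X_{i(j)}$.

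Finally I would compare the two direct limit topologies using the closed-set criterion recalled in the paper. If $A$ is closed in the direct limit along $\{Y_j\}$, then for each $i$ the intersection $A \cap X_i = (A \cap Y_{j(i)}) \cap X_i$ is closed in $X_i$, because $X_i$ inherits its topology from $Y_{j(i)}$ (both being subspaces of $MET^{<0}(M)$); consequently $A$ is closed in the direct limit along $\{K_i\}$, and the reverse implication follows by symmetry. Hence the two topologies agree, and the identity map is a homeomorphism between the two candidate constructions of $MET^{<0}_\infty(M,g)$.

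The only subtle point I anticipate is the interpretation of the phrase \emph{sequence of compact subsets covering $M$}: for the argument above to go through, one needs the sequence to be an exhaustion in the strong sense that every compact subset of $M$ is eventually contained in some term. Otherwise the two unions need not even agree as sets. This is harmless because one can always pass to a subsequence with $K_i \subset \mathrm{int}(K_{i+1})$ and $\bigcup_i K_i = M$; such a cofinal subfiltration induces the same direct limit topology as the original, and between any two such exhaustions the interleaving step applies verbatim. Beyond this bookkeeping, the proof is entirely formal.
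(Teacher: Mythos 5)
Your argument is correct and is essentially the paper's own proof: both rest on the set-level equality together with the interleaving $K_i\subset L_{j(i)}$ and the characterization of the direct limit topology (you phrase it via the closed-set criterion, the paper via continuity of the identity map restricted to each $MET^{<0}_{K_i}(M,g)$, which amounts to the same thing). Your closing caveat---that ``covering'' must be read as an exhaustion so that each $K_i$ is eventually contained in some $L_j$---makes explicit a hypothesis the paper uses silently when it asserts that every $K_i$ lies in some $L_j$, so flagging it is a reasonable refinement rather than a divergence.
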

\begin{proof}
Let
$L_1\subset L_2\subset\cdots$ and $K_1\subset K_2\subset\cdots$ be
two sequences of compact subsets
covering $M$, then it is clear that 
$\bigcup MET^{<0}_{K_i}(M,g)=\bigcup MET^{<0}_{L_i}(M,g)$ as sets. Give
them the direct limit topology.

Note that every $K_i$ is contained in some $L_j$ and 
$MET^{<0}_{K_{i}}(M,g)$ is a subspace of $MET^{<0}_{L_{j}}(M,g)$. Thus,
for all $i=1,2,\ldots,$ we have a
continuous map
\beq
MET^{<0}_{K_{i}}(M,g)\hookrightarrow 
\bigcup_{j=1}^{\infty} MET^{<0}_{L_j}(M,g).
\eeq
Hence
the identity map
\beq
id:\bigcup_{i=1}^{\infty}MET^{<0}_{K_i}(M,g)\to\bigcup_{j=1}^{\infty}MET^{<0}_{L_j}(M,g)
\eeq
is continuous. Similarly one proves that the identity map going in the
opposite direction is also continuous. This proves the lemma.
\end{proof}

In this paper we want to make use of some results in pseudoisotopy
theory in order
to shed light on the topology of $MET^{<0}_{\infty}(M,g)$. 
Let $(\Z/p)^{\infty}$ denote a countably infinite sum of finite 
cyclic groups of order $p$. We prove the following theorem.
\begin{thm}\label{main1}
Let $M$ be a noncompact manifold and assume that $MET^{<0}(M)$
is nonempty. Then, for any $g\in MET^{<0}(M)$
\begin{enumerate}
\item $MET_{\infty}^{<0}(M,g)$ has infinitely many
path connected components, provided $dim\, M\geq 10$.
\item If $\mathcal{K}$ is any component 
of $MET^{<0}_{\infty}(M,g)$, then $\pi_1(\mathcal{K})$
contains a subgroup isomorphic to $(\Z/2)^{\infty}$, provided $dim\, M~\geq~ 14$, and
\item For each odd prime $p$, 
$\pi_{2p-4}(\mathcal{K})$ contains a subgroup isomorphic to $(\Z/p)^{\infty}$, provided
$\frac{dim\, M-10}{2}> 2p-4$.
\end{enumerate}
\end{thm}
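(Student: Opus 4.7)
The strategy is to adapt the pseudoisotopy construction of Farrell and Ontaneda to this noncompact, finite-volume, ``identical ends'' setting, and then invoke the Hatcher--Wagoner--Igusa calculations of the homotopy groups of the stable pseudoisotopy space. Set $n=\dim M$.

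First, fix a smoothly embedded closed $n$-ball $B\subset M$ whose interior lies in a compact set $K_0\subset M$. The aim of this step is to associate, to each pseudoisotopy $\phi\in\mathcal{P}(D^{n-1})$, a Riemannian metric $g_\phi$ on $M$ that lies in $MET^{<0}_{K_0}(M,g)$ (hence in $MET^{<0}_\infty(M,g)$), with the assignment $\phi\mapsto g_\phi$ continuous. One builds $g_\phi$ by twisting $g$ inside a thin spherical collar in $B$ by $\phi$, invoking Ontaneda's warped-product smoothing so that the resulting metric remains pinched negatively curved throughout the family. This produces a continuous map
\[
T\colon\mathcal{P}(D^{n-1})\longrightarrow MET^{<0}_{K_0}(M,g).
\]
Because the modification is confined to $K_0$, it is automatically disjoint from the cusps, so the identical-ends condition is preserved.

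Second, invoke the classical computations of the stable pseudoisotopy space of a disk: Hatcher--Wagoner supplies subgroups isomorphic to $(\Z/2)^\infty$ in $\pi_0\mathcal{P}(D^{n-1})$ and in $\pi_1\mathcal{P}(D^{n-1})$, and Igusa's theorem produces subgroups isomorphic to $(\Z/p)^\infty$ in $\pi_{2p-4}\mathcal{P}(D^{n-1})$ for each odd prime $p$. Igusa's pseudoisotopy stability theorem ensures that these subgroups are already present unstably once $n-1$ lies in the appropriate stable range, which is exactly what the hypotheses $\dim M\geq 10$, $\dim M\geq 14$, and $\dim M-10>2(2p-4)$ encode.

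Third, for each path-component $\mathcal{K}$ of $MET^{<0}_\infty(M,g)$, after adjusting the basepoint so that $T$ lands in $\mathcal{K}$, show that $T_*$ sends the relevant $(\Z/p)^\infty$ summand injectively into $\pi_k(\mathcal{K})$. The standard route is to factor $T_*$ through a detection invariant --- an exotic smooth-structure invariant at the $\pi_0$ level, a higher Reidemeister-torsion or Waldhausen $A$-theory class in positive degree --- and verify that the invariant remains nontrivial on the $(\Z/p)^\infty$ summand. Part (1) of the theorem follows from the $\pi_0$ case, since an infinite image in $\pi_0 MET^{<0}_\infty(M,g)$ is exactly an infinite set of path-components.

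The principal obstacle is the first step: producing a \emph{continuous} family of pinched negatively curved metrics parametrized by the pseudoisotopy space, all agreeing with $g$ outside a fixed compact set. Pinched negative curvature is fragile under arbitrary $C^\infty$ perturbations, so the twist cannot be introduced by a naive interpolation; Ontaneda's smoothing --- grafting a hyperbolic-type deformation onto $g$ along a thin tube via warped products with carefully controlled derivatives --- is the central geometric tool, and verifying that the curvature bounds hold uniformly across the parameter space is the delicate point. Once this construction is in place, the combination of Igusa stability and the detection step is essentially a formal consequence of pseudoisotopy theory.
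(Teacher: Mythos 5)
There is a genuine gap, and it occurs at the very first place where your argument needs input from pseudoisotopy theory: the pattern space is wrong. The homotopy groups of $\mathcal{P}(D^{n-1})$ do \emph{not} contain $(\Z/2)^{\infty}$ or $(\Z/p)^{\infty}$. The infinitely generated torsion exploited here comes from a nontrivial fundamental group: by Hatcher--Wagoner, $\pi_0\mathcal{P}$ involves $Wh_2(\pi_1)$ and $Wh_1(\pi_1;\Z/2\times\pi_2)$, which vanish for the simply connected disk, and stably $\mathcal{P}(D^{n-1})$ is governed by Waldhausen's $A(*)$, whose homotopy groups are finitely generated in each degree. The $(\Z/2)^{\infty}$ and $(\Z/p)^{\infty}$ subgroups used in the paper live in $\pi_k Diff(S^1\times S^{n-2}\times I,\partial)$ and survive in $\pi_k P^{\text{TOP}}(S^1\times S^{n-2})$, i.e.\ they come from the circle factor (infinite cyclic fundamental group). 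This is exactly why the construction must be performed in a tubular neighborhood of an embedded circle --- in the paper, a closed geodesic with trivial normal bundle, whose existence in the noncompact finite-volume setting is itself nontrivial (Closing Lemma, plus the hyperbolic/parabolic dichotomy to get a class not represented near the cusps) --- and not inside a ball. Relatedly, your worry about preserving pinched negative curvature is a non-issue in the correct construction: the new metrics are pushforwards $\varphi g$ of $g$ by diffeomorphisms of $M$ supported in the tube, hence isometric to $g$; no warped-product smoothing or curvature estimate is needed, and appealing to Ontaneda-type smoothing here is both unnecessary and unjustified for an arbitrary pinched metric $g$.

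The second problem is the detection step, which you describe as ``essentially a formal consequence of pseudoisotopy theory'' via a torsion or $A$-theory invariant of the metric family. No such invariant of $MET^{<0}_{\infty}(M,g)$ is defined or used; the detection is genuinely geometric and is where all the hypotheses enter. Assuming a hypothetical filling $D^{k+1}\to MET^{<0}_{\infty}(M,g)$ of $\Phi\circ f$, one uses: (i) the uniqueness and continuous dependence of the closed $g_s$-geodesics in the fixed free homotopy class (Anosov property of the geodesic flows, Poincar\'e maps, implicit function theorem --- Section 2 of the paper); (ii) passage to the infinite cyclic cover $\widehat{M}$ determined by the geodesic; (iii) the family of $\widehat{g}_s$-geodesic rays normal to the lifted geodesic, extended to the boundary at infinity to produce a family of topological pseudoisotopies of $S^1\times S^{n-2}$ restricting to $f$ on $S^k$; and (iv) the quasi-isometry lemma for $id:(\widetilde{M},\widetilde{g})\to(\widetilde{M},\widetilde{g}_s)$, which is the only place the ``agree outside a compact set'' (identical ends) hypothesis is used and is what makes the extension to infinity possible. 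So the difficulty is distributed oppositely to how you describe it: constructing the family of metrics is easy, while detecting its nontriviality is the substantive geometric work, and your proposed route does not supply it.
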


These results can be considered as an extension to noncompact
manifolds of those obtained by Farrell
and Ontaneda in \cite{farrell2010} for high-dimensional compact manifolds
with negative sectional curvature. (Compare also \cite{FO15} where spaces of
\textit{nonpositively} curved metrics on negatively curved manifolds are considered.)
\par In fact, we show that the same basic idea of their paper can be
extended to the setting of noncompact manifolds provided the metrics
agree off a sufficiently large compact set.
We establish, in Section \ref{preliminaries}, the existence
of closed geodesics in some complete negatively curved manifolds and prove that certain family of closed geodesics, arising from continuous variations of negatively curved  Riemannian metrics, varies continuously as well.
In Section \ref{components} we prove Theorem \ref{main1}.

This paper is part of my thesis \cite{bustamante} written under
the supervision of Tom Farrell at Binghamton University. I am
infinitely grateful to him for his guidance 
during this project and for pointing out key ideas 
that led to the results of this paper.
I also benefited from conversations with Andrey Gogolev, Tam Nguyen-Phan and Pedro Ontaneda. I acknowledge the referee, whose thorough reading
of the paper led to a better exposition of the results. Finally
I acknowledge the hospitality of Yau Mathematical Sciences Center of
Tsinghua University in Beijing, where part of this project was carried
out.
\section{Closed geodesics in noncompact negatively curved manifolds}\label{preliminaries}
In this section we recall some basic facts about the topology of 
noncompact manifolds that admit a complete, pinched negatively curved
Riemannian metric with finite volume. We establish the existence
of closed geodesics representing certain free homotopy classes of
loops and analyze the way they bahave under variations of the metric.

\par For a complete manifold $(M,g)$ with pinched negative curvature and finite volume, the $\epsilon$-\textit{thin} part of $M$ is
\beq
M_{<\epsilon}=\{x\in M|\text{InjRad}_g(x)<\epsilon\}.
\eeq

Let $0<\epsilon\leq\mu/2$ ($\mu$ the Margulis constant
\cite[p. 101]{BalGro85}). Each 
end $E$ of $M$ can be realized as a connected
component of $M_{<\epsilon}$, that is, there is a unique connected component
$U_{\epsilon}(E)$ of $M_{<\epsilon}$ such that $U_{\epsilon}(E)$ is a 
neighborhood of $E$. For every end $E$ there exists a codimension $1$ closed submanifold $N$ of $M$ such that $U_{\epsilon}(E)$ is diffeomorphic to
$N\times(0,\infty)$. Each $N$ is called a \textit{cusp cross section} and
its fundamental group is isomorphic to a maximal
virtually nilpotent subgroup of $\pi_1M$.
Furthermore, $M$ has only finitely many ends and
any two ends of $M$ have disjoint neigborhoods. These
facts are proven in \cite{gromov1978}, \cite{Eberlein1980},\cite{schroeder1984}.
\par The fundamental group $\pi_1M$ of $M$ acts freely, discretely and by
isometries on the universal cover $\widetilde{M}$ of $M$. The elements of
$\pi_1M$ are then classified into two types: either they are \textit{hyperbolic}
if they fix exactly two points 
of the boundary at infinity of $\widetilde{M}$ and translates the unique geodesic
joining these points, or \textit{parabolic} if they fix
exactly one point $\bar{x}$ of the boundary at infinity and leave the 
horospheres at $\bar{x}$ invariant. The boundary at infinity is 
understood here as equivalence classes of asymptotic geodesics in $\widetilde{M}$
(see \cite{eberlein1973}). Another characterization of hyperbolic
isometries is given in terms of the \textit{displacement function}. For
each isometry $f:\widetilde{M}\to\widetilde{M}$, the displacement function
$d_{f}:\widetilde{M}\to\R$ takes $x\in\widetilde{M}$ to $d(x,f(x))$. $f$ is hyperbolic if and only if it has no fixed points and 
$d_f$ attains a minimum (see \cite{BalGro85}).

We now prove that certain free homotopy classes of loops in $M$ can be
represented by closed geodesics in $M$. The first step towards this is
to guarantee the existence of closed geodesics in $M$. This is a
consequence of the Closing Lemma of Ballman-Brin-Spatzier:
\begin{thm}[(Ballman-Brin-Spatzier)]\label{closing lemma}
Let $M$ be a complete Riemannian manifold with finite volume
and nonpositive sectional curvature bounded from below. 
Then the set of vectors
tangent to regular closed geodesics is dense in the unit tangent
bundle $SM$ of $M$.
\end{thm}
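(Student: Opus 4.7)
The plan is to combine Poincaré recurrence on $SM$ with the convexity properties of nonpositive curvature to produce closed geodesics near arbitrary recurrent vectors, and then deal separately with the regularity condition.

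First I would exploit the finite volume. The Liouville measure on $SM$ is finite and invariant under the geodesic flow $\phi_t$, so Poincaré recurrence supplies a full-measure (hence dense) set $R\subset SM$ of recurrent vectors: for every $v\in R$ there exist times $t_n\to\infty$ such that $\phi_{t_n}(v)\to v$. Fix $v\in R$ and a lift $\tilde v\in S\widetilde M$ with associated geodesic $\tilde\gamma$. Recurrence translates, upstairs, into the existence of deck transformations $\alpha_n\in\pi_1 M$ such that $\alpha_n\tilde\gamma(t_n)\to\tilde\gamma(0)$ and $d\alpha_n(\dot{\tilde\gamma}(t_n))\to\dot{\tilde\gamma}(0)$. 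Each $\alpha_n$ is either hyperbolic or parabolic. Parabolic elements fix a single ideal point and push points toward the cusps, so for $v$ projecting into the thick part the parabolic alternative can be excluded for $n$ large, leaving $\alpha_n$ hyperbolic with an axis projecting to a closed geodesic $c_n\subset M$.

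Next I would show these closed geodesics capture $v$ in the limit. Here the convexity of the displacement function $d_{\alpha_n}$ (from nonpositive curvature, as recalled just before the statement) is decisive: $d_{\alpha_n}$ is convex along geodesics, the axis of $\alpha_n$ is its minimum set, and the near-closing of $\tilde\gamma$ by $\alpha_n$ forces $d_{\alpha_n}$ to be nearly minimized along $\tilde\gamma|_{[0,t_n]}$. A standard estimate using this convexity then yields that the axis lies uniformly close to $\tilde\gamma$ on a long central interval, both in position and tangent direction. Projecting downstairs one obtains tangent vectors to $c_n$ converging to $v$. Since $R$ is dense in $SM$, the set of tangent vectors to closed geodesics is dense in $SM$.

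Finally, regularity. Call a closed geodesic regular if its periodic orbit admits no nontrivial parallel perpendicular Jacobi field, i.e.\ the rank is $1$. The rank function on $SM$ is upper semicontinuous, and the set of rank-$1$ vectors is open. It suffices to show this open set is nonempty, for then within any neighborhood in $SM$ the closed-geodesic vectors produced above can be perturbed (again via recurrence, applied to a rank-$1$ recurrent vector) to vectors tangent to regular closed geodesics. Nonemptiness of the rank-$1$ set is the geometric heart of the Ballmann--Brin--Spatzier work: if every vector had rank $\geq 2$, the rank rigidity theorem would force $M$ to be locally reducible or locally symmetric of higher rank, in which case one argues separately that regular closed geodesics are still dense.

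The main obstacle, in my view, is the regularity step: producing closed geodesics at all is a relatively clean Poincaré-recurrence plus convexity argument, but establishing denseness of \emph{rank-one} periodic orbits requires the deeper structural information that eventually crystallized as the rank rigidity theorem. The supporting step most likely to hide technical work is the careful verification, in the finite-volume setting, that one can always recur to a vector in the thick part so as to avoid parabolic deck transformations in the approximation.
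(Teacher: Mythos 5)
The paper itself gives no proof of this statement: it is quoted as the Ballmann--Brin--Spatzier closing lemma and the proof is deferred to Corollary 4.6 of the cited Annals paper, so there is no internal argument to compare against and your sketch must be judged on its own. Judged that way, it has a genuine gap at the closing step. You promote the recurrence-produced deck transformations $\alpha_n$ to axial (hyperbolic) isometries by excluding parabolics via a ``thick part'' argument, but that dichotomy and that argument belong to the pinched negatively curved setting used elsewhere in the paper; the theorem here assumes only nonpositive curvature bounded below and finite volume. In that generality there are deck transformations that are neither elliptic nor axial yet have positive displacement infimum which is not attained (already for the product of a cusped finite-volume hyperbolic surface with a closed hyperbolic surface, elements of the form (parabolic, hyperbolic)); the thin part is not a union of cusp neighborhoods in the pinched sense; and convexity of $d_{\alpha_n}$ only yields near-minimization of the displacement along $\tilde\gamma$ --- it does not show the infimum is attained, which is exactly what axiality requires. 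Closing a recurrent orbit in nonpositive curvature is precisely the delicate point of the Ballmann--Brin--Eberlein/Spatzier argument, and it uses the regularity of the recurrent vector; it is not a routine convexity estimate.

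The regularity step is also mishandled. In this circle of results ``regular'' does not mean rank one; it means that the rank of the vector equals the rank of $M$, i.e.\ the minimal rank. With your definition the asserted density is simply false for finite-volume higher-rank manifolds (the product above, or higher-rank locally symmetric spaces), where no vector has rank one, so your fallback ``argue separately that regular closed geodesics are still dense'' cannot even be formulated consistently with your definition. Moreover, density requires that the regular set be open and dense (a theorem of Ballmann--Brin--Eberlein), not merely nonempty, and invoking rank rigidity --- whose proof itself rests on density of regular vectors and on closing lemmas of this very type --- points in the wrong logical direction. For the use this paper makes of the theorem (existence of a single closed geodesic in the pinched negatively curved, finite-volume case) these issues are immaterial, but they are fatal for the statement as actually formulated.
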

For a proof refer to \cite[Corollary 4.6]{ballmann85}.\\\\
Let $(M,g)$ be a noncompact, complete Riemannian manifold with curvature
$-1\leq K\leq -a^2<0$ and finite volume. Suppose that $M$ has
ends $E_1,\ldots, E_r$ with pairwise disjoint neighborhoods $U_1,\ldots, U_r$
respectively, where $U_i$ is diffeomorphic to $N_i\times (0,\infty)$
for $i=1,\ldots, r$ ($N_i$ are the cusp cross sections of $M$).

Let $[S^1,M]$ denote the set of free homotopy classes of loops, i.e.
homotopy classes of continuous maps $S^1\to M$. For every 
$i=1,\ldots, r$, there is a well-defined
map $[S^1,U_i]\to [S^1,M]$ which sends a class $[\alpha]\in [S^1,U_i]$
to $[\sigma_i\circ\alpha]\in [S^1,M]$, where 
$\sigma_i:U_i\hookrightarrow M$ is the inclusion map. Let 
$F_i\subset [S^1,M]$ denote the image of $[S^1,U_i]$ in $[S^1,M]$.
Define the set $\textbf{H}:=[S^1,M]-\displaystyle\bigcup_i F_i$.
\begin{propo}\label{existence}
The set $\textbf{H}$ is nonempty. Moreover every class in $\textbf{H}$
can be represented by a unique closed geodesic in $M$.
\end{propo}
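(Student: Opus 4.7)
The plan is to translate Proposition \ref{existence} into the group-theoretic language of $\pi_1 M$ acting by isometries on $\widetilde{M}$, and reduce both statements to the hyperbolic/parabolic dichotomy recalled at the start of the section. First I identify $[S^1,M]$ with the set of conjugacy classes of $\pi_1 M$, and under this identification I claim that $F_i$ corresponds to the conjugacy classes of elements lying in the image of $\pi_1 U_i\to\pi_1 M$. Since $U_i\cong N_i\times(0,\infty)$ has fundamental group isomorphic to a maximal virtually nilpotent subgroup of $\pi_1 M$, and all nontrivial elements of such a subgroup are parabolic, $\bigcup_i F_i$ is exactly the set of conjugacy classes of parabolic or trivial elements of $\pi_1 M$. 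Consequently $\textbf{H}$ corresponds precisely to the conjugacy classes of \emph{hyperbolic} elements.

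Granted this, nonemptiness follows from Theorem \ref{closing lemma}: a regular closed geodesic $\gamma\subset M$ exists, and its lift $\widetilde\gamma\subset\widetilde{M}$ is translated by some nontrivial $g\in\pi_1 M$. Since $\pi_1 M$ acts freely, $g$ fixes no point of $\widetilde{M}$, and the displacement function $d_g$ attains its infimum along $\widetilde\gamma$, so the characterization recalled above forces $g$ to be hyperbolic; hence $[\gamma]\in\textbf{H}$.

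For the existence part, given $[\alpha]\in\textbf{H}$ represented by a hyperbolic $g\in\pi_1 M$, I would invoke the standard fact that the minimum locus of $d_g$ is a complete geodesic $\widetilde\gamma_g\subset\widetilde{M}$ translated by $g$ (the axis of $g$), whose image in $M$ is a closed geodesic freely homotopic to $\alpha$. Uniqueness then follows from strict convexity of $d_g$ in strictly negative curvature, which makes $\widetilde\gamma_g$ the \emph{unique} minimum locus of $d_g$: any other closed geodesic representing $[\alpha]$ lifts (after replacing $g$ by a conjugate) to the axis of $g$, and hence coincides with the first one as an unparametrized curve in $M$.

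The main obstacle I expect is the identification in the first paragraph, in particular the inclusion $\{\text{parabolic conjugacy classes}\}\subseteq \bigcup_i F_i$. This requires the bijective correspondence between the ends $E_1,\dots,E_r$ of $M$ and the $\pi_1 M$-orbits of parabolic fixed points on $\partial_\infty\widetilde{M}$, together with the identification of the stabilizer of such a fixed point with a conjugate of the image of $\pi_1 U_i\to\pi_1 M$. Both facts are contained in the cited works of Gromov, Eberlein, and Schroeder, but they need to be assembled carefully so that every parabolic element is seen to lie in a conjugate of some $\pi_1 U_i$.
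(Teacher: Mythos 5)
Your proposal is correct and follows essentially the same route as the paper: both reduce everything to the hyperbolic/parabolic dichotomy for deck transformations, get nonemptiness from the Ballmann--Brin--Spatzier closing lemma, obtain the geodesic representative from the axis of the hyperbolic element (the paper merely makes the free homotopy onto the axis explicit via orthogonal projection), and get uniqueness from uniqueness of the axis in strictly negative curvature, with the one implicit ingredient in the paper (every parabolic class is carried by some cusp neighborhood $U_i$) being exactly the point you flag. One caution: parabolicity of the nontrivial elements of $\pi_1 U_i$ does not follow from maximal virtual nilpotency alone (the maximal virtually nilpotent subgroup containing a hyperbolic element is virtually cyclic and consists of hyperbolic elements); it follows, as in the paper, from the fact that these cusp subgroups preserve horospheres at the corresponding end, i.e.\ loops in $U_i$ can be pushed toward infinity --- the same structure-theory input from Eberlein, Gromov and Schroeder that you already invoke for the converse inclusion.
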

\begin{proof}
By Theorem \ref{closing lemma}, there exists a closed geodesic
$\alpha:S^1\to M$ in $M$. We claim that $\alpha$
is not freely homotopic to a loop whose image is contained in $U_i$, for 
any $i$.
To see this, suppose that $\beta:S^1\to M$ is a loop in $M$ freely homotopic
to $\alpha$ and $\beta(S^1)\subset U_i$, for some $i$.\\
Since $\alpha$ is a closed geodesic in $M$, then any lifting $\widetilde{\alpha}$
is contained in a unique geodesic line in the universal cover $\widetilde{M}$ that
joins two points of the boundary at infinity of $\widetilde{M}$. Therefore 
its class in $\pi_1(M,\alpha(*))$, $*\in S^1$, 
can be regarded as a hyperbolic isometry of 
$\widetilde{M}$. On the other hand, the loop $\beta$ can be \cita pushed towards
infinity'' which only means that its class in 
$\pi_1(U_i,\beta(*))\subset\pi_1(M,\beta(*))$ 
can be regarded as a parabolic isometry
of $\widetilde{M}$. (We see $\pi_1(U_i)$ as a subgroup of 
$\pi_1(M)$ using the fact that $\pi_1(U_i)\simeq\pi_1(N_i)$,
where $N_i$ is the cusp cross section of $E_i$).\\
Since a hyperbolic isometry can't be conjugated to a
parabolic isometry, the loops $\alpha$ and $\beta$ can't be freely homotopic. This
proves that $\textbf{H}$ is nonempty, which is the first part of the proposition.

Now suppose that a class $[c]\in\textbf{H}$ is given. Then
$c:I\to M$ represents a class in $\pi_1(M,c(0))$ which 
corresponds to a deck transformation  $f_c$ of $\widetilde{M}$. This deck
transformation is a hyperbolic isometry of $\widetilde{M}$. Let
$\gamma:\R\to\widetilde{M}$ be the unique maximal geodesic in $\widetilde{M}$
translated by $f_c$.

Recall that the image of $\gamma$ is a convex subset of 
$\widetilde{M}$. Hence for each $s\in I$, there exists a unique point
$p_s\in\gamma(\R)$,
called the orthogonal projection onto $\gamma(\R)$,
such that 
\beq
d(\tilde{c}(s),\gamma(\R))=d(\tilde{c}(s),p_s).
\eeq

Let $p_0\in\gamma(\R)$
be the orthogonal projection of $\tilde{c}(0)$ onto $\gamma(\R)$, where
$\tilde{c}:I\to\widetilde{M}$ a lifting of $c$ to $\widetilde{M}$. It is
not hard to see that $p_1:=f_c(p_0)$ is the orthogonal projection of 
$\tilde{c}(1)$ onto $\gamma(\R)$. For otherwise, if $p'$ is the orthogonal
projection, then the sum of interior angles of the geodesic triangle with
vertices $p', p_1$ and $\tilde{c}(1)$ would be greater than $\pi$.

Let $\gamma_{p_0,p_1}\subset\widetilde{M}$ be the geodesic segment from $p_0$ to
$p_1$.
Now, for each $s\in I$, let $\sigma_s:I\to\widetilde{M}$ be the unique
geodesic segment in $\widetilde{M}$ joining $\tilde{c}(s)$ with the orthogonal
projection of $\tilde{c}(s)$ onto the closed convex subset 
$\gamma_{p_0,p_1}\subset\widetilde{M}$ (see Figure 1).

\begin{figure}[!h]\label{homotopy}
\vspace*{0cm}
    \begin{center}
    \includegraphics[scale=0.6]{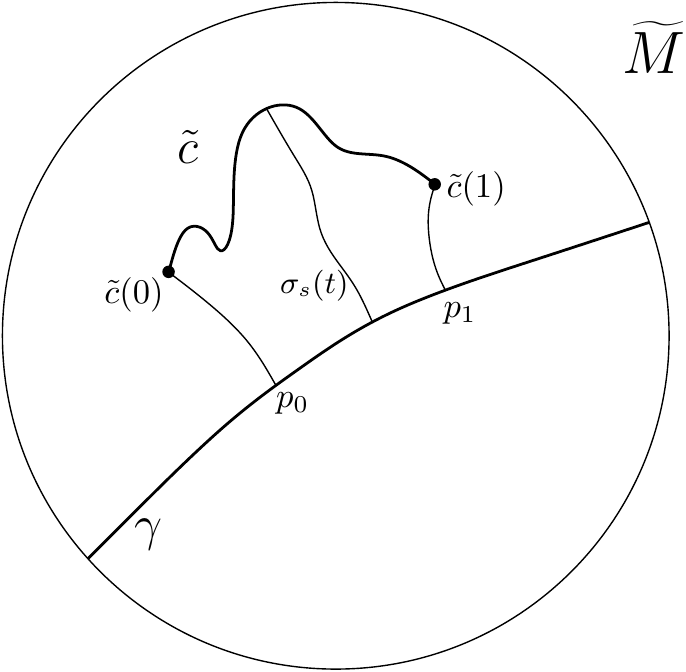}
    \caption{\small Equivariant homotopy between $\tilde{c}$ and
    a geodesic segment.}
    \end{center}
 \end{figure}

Note that $(f_c\circ\sigma_0)(t)=\sigma_1(t)$. Hence
the map $I\times I\to\widetilde{M}$
sending $(s,t)$ to $\sigma_s(t)$ is a continuous 
homotopy from $\tilde{c}$ to $\gamma$ which is
equivariant respect to deck transformations. Therefore this homotopy
descends to a homotopy $F_t:I\to M$, $t\in [0,1]$, such that
$F_0(s)=c(s)$ and $F_1(s)$ is a closed geodesic in $M$. 

Finally, if there is another closed geodesic $\beta$ 
freely homotopic to $c$ (and therefore to $F_1$), then the
liftings of the geodesics $\beta$ and $F_1$ to the universal cover of $M$ 
are contained in the same maximal geodesic in $\widetilde{M}$.
Hence the images of $\beta$ and $F_1$ in $M$ must coincide. 
This completes the proof of the proposition.
\end{proof}
\begin{coro}\label{normalbundle}
Let $(M,g)$ be a noncompact, complete Riemannian manifold with curvature
$-1\leq K\leq -a^2<0$ and finite volume. Then there exists a free homotopy class
of loops $[c]\in [S^1,M]$ such that $[c]$ is represented by a closed geodesic
in $M$ with trivial normal bundle.
\end{coro}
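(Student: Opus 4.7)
The plan is to apply Proposition~\ref{existence} to obtain \emph{some} closed geodesic in $M$, and, if its normal bundle happens to be nontrivial, to replace it by its twofold iterate. By Proposition~\ref{existence} the set $\textbf{H}$ is nonempty, so I would first pick a class $[\alpha] \in \textbf{H}$ together with its unique representing closed geodesic $\alpha: S^1 \to M$. Its normal bundle $\nu(\alpha) \to S^1$ is a real vector bundle of rank $\dim M - 1$, hence classified by an element of $\pi_0(O(\dim M - 1)) \cong \Z/2$; if this invariant vanishes, the class $[c]=[\alpha]$ already satisfies the conclusion and there is nothing more to do.

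Otherwise, I would let $d_2: S^1 \to S^1$ be a degree-$2$ covering map and set $c = \alpha \circ d_2$. Geometrically $c$ traces out the image of $\alpha$ twice; on the universal cover its lift lies along the same axis as the lift of $\alpha$ and is translated by $f_\alpha^{\,2}$, so $c$ is still a closed geodesic in $M$. Its normal bundle is naturally identified with the pullback $d_2^{*}\nu(\alpha)$. Since pullback along a degree-$d$ self-map of $S^1$ acts as multiplication by $d$ on the $\Z/2$-valued classifying invariant, the bundle $d_2^{*}\nu(\alpha)$ is trivial, and $[c]=[\alpha \circ d_2]$ is the required class.

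I do not anticipate a significant obstacle: the whole strategy rests on the classical fact that doubling kills the mod-$2$ monodromy of a real vector bundle over $S^1$, combined with the elementary observation that the twofold iterate of a closed geodesic is again a closed geodesic. The one routine point to verify is that $\alpha \circ d_2$ qualifies as a closed geodesic in the sense used earlier in the section, which is immediate since $d_2$ is a local isometry of $S^1$ and hence the composition is locally a geodesic wherever $\alpha$ is.
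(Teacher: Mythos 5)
Your proposal is correct and follows essentially the same route as the paper: both arguments note that the only obstruction to triviality of the normal bundle of a geodesic representing a class in $\textbf{H}$ is a $\Z/2$-invariant (the paper phrases it via $w_1:\pi_1M\to\Z/2$, you via $\pi_0(O(n-1))$), and both kill it by passing to the square of the loop. Your minor variation of doubling the geodesic itself, rather than doubling the class and re-invoking Proposition~\ref{existence}, is a harmless and slightly more direct execution of the same idea.
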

\begin{proof}
Let $[c]\in\textbf{H}$. Suppose that the representative $c:[0,1]\to M$ of this
class is an immersed closed curve.
Denote by $\bar{c}\in\pi_1(M,c(0))$ its
fundamental group class. Since $\pi_1M$ is torsion free, the kernel of
the first Stiefel-Whitney class of the normal bundle of $c$ in $M$ is non-trivial.
Thus, without lost of generality we can assume that 
$\bar{c}\in\ker (w_1:\pi_1M\to\Z/2)$. For otherwise
we just take \cita twice $\bar{c}$\, ''. Therefore, by Proposition
\ref{existence}, $[c]$ can be represented
by a closed geodesic and the normal bundle of this closed geodesic has vanishing
first Stiefel-Whitney class.
\end{proof}
We now analyze the behavior of those closed geodesics under perturbations of the
metric. For this we need to recall some results from the theory of dynamical
systems. Let $X$ be a smooth manifold and dentote by 
$\Gamma(TX)$ the space
of smooth vector fields on $X$ with the compact-open $C^{\infty}$-topology.
Let $|\gamma|$ be a periodic orbit
of a vector field $\xi\in\Gamma(TX)$ 
and let $\Sigma\subset X$ be a section transversal
to $\xi$ through a point $x_0\in|\gamma|\subset X$. 
(An orbit $|\gamma|$ of a vector field is understood here as the image
in $X$ of an integral curve $\gamma:\R\to X$ of the vector field).
For a small neighborhood
$V\subset\Sigma$ of $x_0$, let $P:V\to\Sigma$ be the $C^{\infty}$-map 
that assigns to each $x\in V$
the first point where the orbit through $x$ returns to intersect $\Sigma$.
$P$ is 
called the \textit{Poincar\'e map} associated to the vector field $\xi$
or to the closed orbit $\gamma$.
Notice that periodic
orbits of $\xi$ through points in $V$ correspond to fixed points of the 
Poincar\'e map. We say that a fixed point $x\in V$ of the Poincar\'e map
$P:V\to\Sigma$ is \textit{hyperbolic} if the derivative of $P$ at $x$ has no eingevalues of 
modulus $1$. A periodic orbit through a hyperbolic point is called a 
\textit{hyperbolic periodic orbit}.

Recall that if $(X,g)$ is
a smooth Riemannian manifold,
its \textit{unit tangent bundle} is defined
by $S_gX=\{v\in TX|\sqrt{g(v,v)}=1\}$. There is 
a unique vector field $G:S_gX\to TS_gX$ on the unit 
tangent bundle of $X$, whose integral
curves in $S_gX$ are of the form $t\mapsto(\alpha(t),\dot{\alpha}(t))$, 
where $\alpha(t)$ is a unit speed geodesic in $(X,g)$. 
The vector field $G$ is called the
\textit{$g$-geodesic field} and its flow the \textit{$g$-geodesic flow} on $S_gX$.
Periodic orbits of the $g$-geodesic flow on $SX$ are in
one-to-one correspondence with closed geodesics in $(X,g)$ (up to reparametrization). 

It will be convenient to refer to the geodesic flow on the \textit{sphere bundle}
$SX$ of $X$. $SX$ is defined as the quotient of $TX-X$ by identifying
two non-zero tangent vectors if they lie
on the same ray, that is, $v,w\in TX-X$ are equivalent 
if they are based at the same point and
if there is a positive number $a$ such that $w=a\, v$.
Note that the map $\eta:SX\to S_gX$ given
by $v\mapsto v/\sqrt{g(v,v)}$ defines a diffeomorphism for any Riemannian metric $g$. The
geodesic field on the sphere bundle is defined by the composition
$SX\xrightarrow{\eta}S_gX\xrightarrow{G} TS_gX\xrightarrow{\eta_*^{-1}}TSX$,
where $\eta_*$ denotes the differential map of $\eta$. Its flow will be
called the \textit{$g$-geodesic flow on the sphere bundle} $SX$.
Note that if $|\gamma|$ is a hyperbolic periodic orbit of the geodesic flow on the
unit tangent bundle $S_gX$ then
$\eta^{-1}(|\gamma|)$ 
is a hyperbolic periodic orbit of the geodesic flow on the sphere bundle $SX$.

For the rest of this section we work in the following setting:
$M$ will be a smooth noncompact
manifold such that $MET^{<0}(M)$ is nonempty. We consider a continuous
family of Riemannian metrics $g_s\in MET^{<0}(M)$, $s\in D^{k+1}:=\{x\in\R^{k+1}||x|\leq 1\}$.
We fix a free homotopy class of loops $[c]\in \textbf{H}$. 
By Proposition \ref{existence}, there
exists a unique
periodic orbit $|\gamma_s|$ of the $g_s$-geodesic flow on $SM$ such that 
$[\tau_M\circ\gamma_s]=[c]$, where $\tau_M:SM\to M$ denotes the canonical
projection onto $M$. Let $C^{\infty}(S^1,M)$ be the space of smooth 
maps from the circle to $M$, with the compact-open-$C^{\infty}$ topology.

The next lemma is well known and can be proven in
more generality for Poincar\'e maps associated to complete vector 
fields on smooth manifolds. Since our
interest here is only in the geodesic field on $SM$, 
we state and sketch the proof of the result for this case only.
\begin{lemma}\label{poincare}
Let $\Sigma\subset SM$ be a section transversal to the $g_{s_0}$ geodesic field on $SM$ 
through a point $v_{s_0}\in |\gamma_{s_0}|$. 
Then there exist
$\varepsilon>0$ and an open neighborhood $V\subset\Sigma$ of
$v_{s_0}$ such that, for $s\in B_{\varepsilon}(s_0)$, the Poincar\'e map
$P_s$ associated to $\gamma_s$ is defined and the map 
$B_{\varepsilon}(s_0)\to C^{\infty}(V,\Sigma)$, 
sending $s\in B_{\varepsilon}(s_0)$ to $P_s$ is continuous.
\end{lemma}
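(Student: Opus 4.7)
The plan is to deduce the lemma from two ingredients: continuous dependence of the geodesic field on the metric, and the implicit-function theorem applied at the transversal section. First I would show that the map $MET^{<0}(M) \to \Gamma(TSM)$ sending $g$ to its geodesic field $X_g$ on the sphere bundle is continuous in the compact-open $C^\infty$-topology. Because $SM$ is defined without reference to any metric, this reduces to a local calculation: the geodesic spray of $g$ on $TM$ is a polynomial expression in $g$, $g^{-1}$ and first derivatives of $g$ (via the Christoffel symbols), and the diffeomorphism $\eta_g : SM \to S_g M$ used to transport the field to $SM$ is itself smooth in $g$. In particular, the family $X_s := X_{g_s}$ varies continuously with $s \in D^{k+1}$.

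Next, I would invoke standard continuous dependence of flows on vector fields: for a continuous family of smooth vector fields on a manifold, the local flows $\Phi^t_s$ are jointly continuous in $(s,t,v)$ and smooth in $(t,v)$, with their $C^\infty$-jets in $(t,v)$ depending continuously on $s$. Let $T_0>0$ be the minimal period of $\gamma_{s_0}$, so that $\Phi^{T_0}_{s_0}(v_{s_0}) = v_{s_0} \in \Sigma$. Pick a local smooth defining function $\rho : W \to \R$ for $\Sigma$ on a neighborhood $W$ of $v_{s_0}$. The transversality assumption says precisely that $\tfrac{d}{dt}\bigl[\rho(\Phi^t_{s_0}(v_{s_0}))\bigr]$ is nonzero at $t = T_0$. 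Applying the implicit function theorem to the map $(s,v,t) \mapsto \rho(\Phi^t_s(v))$ at $(s_0, v_{s_0}, T_0)$ yields $\varepsilon > 0$, a neighborhood $V \subset \Sigma$ of $v_{s_0}$, and a function $T(s,v)$, continuous in $s$ and smooth in $v$, with $T(s_0, v_{s_0}) = T_0$ and $\Phi^{T(s,v)}_s(v) \in \Sigma$. Setting $P_s(v) := \Phi^{T(s,v)}_s(v)$ gives the desired Poincaré map; that $P_s$ has a fixed point on the periodic orbit $|\gamma_s|$ for $s$ near $s_0$ follows from the uniqueness part of Proposition \ref{existence}.

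The main obstacle is to upgrade the continuity of $s \mapsto P_s$ from pointwise continuity to continuity into $C^\infty(V, \Sigma)$. This amounts to showing that every partial derivative of $P_s$ in the $\Sigma$-direction varies continuously with $s$, which in turn follows once one knows that all $v$-derivatives of the flow $\Phi^t_s$ are jointly continuous in $(s,t,v)$. These higher derivatives satisfy the variational equations along the flow, namely linear ODE's whose coefficients are polynomial in derivatives of $X_s$, so the required joint continuity follows by iterating continuous dependence of solutions of linear ODE's on initial data and parameters. Care is only needed to check that the time interval over which the variational equations are integrated can be uniformly bounded for $s$ near $s_0$ and $v$ near $v_{s_0}$, which is immediate because $T(s,v)$ is itself continuous and hence locally bounded. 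After shrinking $\varepsilon$ and $V$ if necessary, combining this with the chain rule applied to $P_s = \Phi^{T(\cdot,\cdot)}_s$ yields the continuity of $s \mapsto P_s$ into $C^\infty(V,\Sigma)$.
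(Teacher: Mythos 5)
Your proposal is correct and follows essentially the same route as the paper, which proves the lemma as a parametrized version of Klingenberg's argument: the paper applies the inverse function theorem to the map $(s,t,v)\mapsto (s,G_s(t,v))$ on $D^{k+1}\times\R\times\Sigma$ and reads off the return time and Poincar\'e map from the inverse, whereas you use a local defining function for $\Sigma$ and the implicit function theorem to produce the hitting time $T(s,v)$, then set $P_s(v)=\Phi^{T(s,v)}_s(v)$. Your extra care in upgrading to continuity into $C^{\infty}(V,\Sigma)$ via the variational equations is a legitimate way of spelling out what the paper obtains implicitly from the smoothness of the inverted map, so the two arguments differ only in packaging.
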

\begin{proof}
The proof is just a parametrized version of 
\cite[Lemma 3.1.10]{klingenberg1978}. Let $G_s:SM\times\R\to SM$ denote
the $g_s$-geodesic flow on $SM$. Define a map 
\beq
\psi:D^{k+1}\times\R\times\Sigma\to
D^{k+1}\times SM
\eeq
by
\beq
\psi(s,t,v)=(s,G_s(t,v)).
\eeq
This is a smooth map since $G_s$ varies continuously with $s$. Using
the fact that $G_s(\ \ ,t):SM\to SM$ is a diffeomorphism, one can check that
the derivative $D\psi$ at $(s_0,\tau_{s_0},v_{s_0})$ is invertible. 
Here $\tau_{s_0}$ denotes the prime period of $\gamma_{s_0}$. Hence,
by the inverse function theorem, there exist $\varepsilon,\varepsilon'>0$
and an open neighborhood $V_{0}\subset\Sigma$ of $v_{s_0}$ so that the restriction
of $\psi$ to 
$B_{\varepsilon}(s_0)\times (\tau_{s_0}-\varepsilon',\tau_{s_0}+\varepsilon')\times V_0\subset D^{k+1}\times\R\times\Sigma$
is a diffeomorphism onto its image. Thus there is a neighborhood
$U\subset SM$ of $v_{s_0}$ such that $\psi^{-1}$ is defined on
$B_{\varepsilon}(s_0)\times U\subset D^{k+1}\times SM$. In particular,
we have 
smooth mappings
\beq
\chi:B_{\varepsilon}(s_0)\times U\to
V_0\ \ \ \text{ and }\ \ \ 
\eta:B_{\varepsilon}(s_0)\times U\to\R,
\eeq
such that, for
every $(s,v)\in B_{\varepsilon}(s_0)\times U$, 
\beq
\psi(s,\tau_{s_0}+\eta(s,v),\chi(s,v))=(s,v).
\eeq

Denote by $\widehat{\chi}:B_{\varepsilon}(s_0)\times U\to
B_{\varepsilon}(s_0)\times V_0$ the map 
$\widehat{\chi}(s,v)=(s,\chi(s,v))$. The restriction of 
$\widehat{\chi}$ to $B_{\varepsilon}(s_0)\times V_0$ is a diffeomorphism
onto its image $\mbox{im}(\widehat{\chi})=B_{\varepsilon}(s_0)\times V$
where $V\subset\Sigma$ is some neighborhood of $v_{s_0}$. 

The Poincar\'e map $P_s$ associated with $\gamma_s$ is nothing but
\beq
P_s=pr\circ\widehat{\chi}^{-1}(s,\ \ )
\in C^{\infty}(V,\Sigma),
\eeq
where $pr:B_{\varepsilon}(s_0)\times V_0\to V_0$ denotes projection onto the second component.

This completes the proof of the lemma.
\end{proof}
\begin{coro}\label{period}
The periods $\tau_s$ of the periodic orbits $\gamma_s$ of the
$g_s$-geodesic flow on $SM$ vary continuously on $B_{\varepsilon}(s_0)$.
\end{coro}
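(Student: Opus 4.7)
The plan is to extract the period $\tau_s$ directly from the smooth function $\eta$ constructed inside the proof of Lemma \ref{poincare}. That function satisfies $\psi(s, \tau_{s_0} + \eta(s,v), \chi(s,v)) = (s,v)$, i.e.\ the $g_s$-geodesic flow sends $\chi(s,v)$ to $v$ in time $\tau_{s_0} + \eta(s,v)$. Unwinding the definition of $P_s$, one sees that $v \in V$ is a fixed point of $P_s$ exactly when $\chi(s,v) = v$. Consequently, if $v_s \in V$ denotes the fixed point of $P_s$ corresponding to the periodic orbit $|\gamma_s|$, then
\beq
\tau_s = \tau_{s_0} + \eta(s, v_s).
\eeq
Since $\eta$ is smooth, it is enough to show that $v_s$ can be chosen to depend continuously on $s$.

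To do this, I would invoke the uniqueness of $|\gamma_s|$ in the class $[c]$ (Proposition \ref{existence}) and pass to the universal cover. The class $[c] \in \textbf{H}$ corresponds to a hyperbolic deck transformation $f_c$ of $\widetilde{M}$ that is independent of the metric, and $|\gamma_s|$ is the image in $SM$ of the tangent line to the translation axis of $f_c$ with respect to the lifted metric $\widetilde{g}_s$. As $\widetilde{g}_s$ varies continuously with $s$, so does the displacement function $d_s(x) = d_{\widetilde{g}_s}(x, f_c(x))$. Strict convexity of $d_s$ along $\widetilde{g}_s$-geodesics, which is a consequence of pinched negative curvature, means that its minimum locus is exactly the axis; together with uniform convergence of $d_s$ on a compact fundamental domain for $\langle f_c \rangle$, this forces continuous variation of the axis in $s$. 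Projecting back to $SM$, one obtains a continuous family of periodic orbits $s \mapsto |\gamma_s|$; after possibly shrinking $V$ so that $|\gamma_{s_0}|$ meets $V$ only at $v_{s_0}$, the intersection $|\gamma_s| \cap V$ reduces to a single point $v_s$ depending continuously on $s$.

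The main obstacle is precisely this continuous dependence of the axis (equivalently, of the fixed point $v_s$ of the Poincar\'e map) on the metric; the pinching bound $-1 \le K \le -a^2 < 0$ is used to guarantee uniqueness and non-degeneracy of the axis and to prevent it from drifting. Once this is established, the corollary follows immediately from the displayed formula and the continuity of $\eta$ proved in Lemma \ref{poincare}.
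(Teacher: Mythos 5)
Your core step is exactly the paper's: the proof of Corollary \ref{period} in the paper consists of the single observation that $\tau_s=\tau_{s_0}+\eta(s,P_s(v_s))$ with $v_s\in|\gamma_s|\cap V$, which is the same formula you derive (since $P_s(v_s)=v_s$). Where you diverge is in making explicit, and then proving, the point the paper leaves implicit: that the periodic point $v_s$ (equivalently the orbit $|\gamma_s|$ near $v_{s_0}$) depends continuously on $s$. The paper effectively supplies this only in Proposition \ref{local argument}, where the fixed point of the Poincar\'e map is produced by the implicit function theorem, using that the $g_s$-geodesic flows are Anosov so that $|\gamma_{s_0}|$ is a hyperbolic orbit; that route gives continuity (indeed smoothness) of $s\mapsto v_s$ with almost no extra geometry. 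Your alternative is more geometric and metric-independent in flavor: you fix the hyperbolic deck transformation $f_c$, identify $|\gamma_s|$ with the tangent line to its $\widetilde{g}_s$-axis, and argue continuity of the axis from uniform convergence of the displacement functions plus convexity. This is a legitimate route, but as written it is a sketch at its crucial step: uniform closeness of $d_s$ to $d_{s_0}$ does not by itself force the minimum loci to be close --- you need the quantitative fact (valid under the upper curvature bound $K\leq -a^2$) that points whose displacement is within $\epsilon$ of the minimum lie in a controlled neighborhood of the axis, and you additionally need convergence of the axes as parametrized curves in the $C^1$ sense to conclude closeness of the corresponding points in $SM$, so that $|\gamma_s|\cap V$ is a single point tending to $v_{s_0}$. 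If you quantify those two points your argument is complete; otherwise the shorter path, consistent with the paper, is to borrow the hyperbolicity-plus-implicit-function-theorem argument that the paper uses in Proposition \ref{local argument}.
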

\begin{proof}
The proof of Lemma \ref{poincare} shows that, for 
$s\in B_{\varepsilon}(s_0)$,
$\tau_s=\tau_{s_0}+\eta(s,P_s(v_s))$, 
where $v_s\in |\gamma_s|\cap V$. 
\end{proof}
\begin{rmk}
Although the orbits $|\gamma_s|$ and their periods are close to each other,
they may be farther apart as parametrized curves. This is remedied
in the proof of the next proposition by linearly reparametrizing the geodesic fields.

\begin{figure}[!h]\label{closeeachother}
\vspace*{0cm}
    \begin{center}
    \includegraphics[scale=0.6]{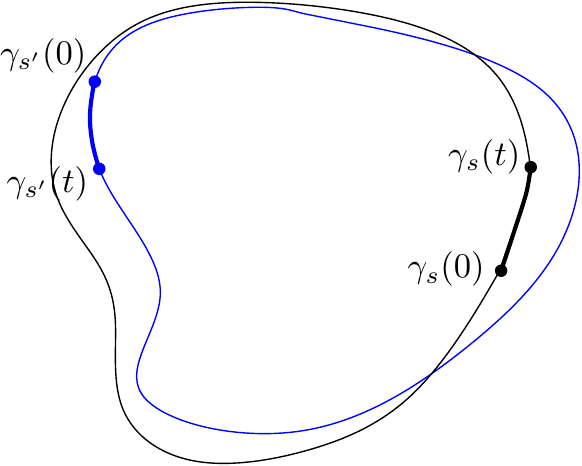}
    \caption{\small Two nearby
    periodic orbits of the $g_s$ and $g_{s'}$ geodesic flows which are \textit{not}
    close to each other as parametrized curves.}
    \end{center}
 \end{figure}

\end{rmk}
\begin{propo}\label{local argument}
Let $s_0\in D^{k+1}$. There exist $\varepsilon>0$ and
a family of parametrized closed smooth curves 
$\alpha_s:S^1\to M$, $s\in B_{\varepsilon}(s_0)$, such that 
\begin{itemize}
\item[i)] $[\alpha_s]=[c]$,
\item[ii)]the map $ B_{\varepsilon}(s_0)\to C^{\infty}(S^1,M)$ given by
$s\mapsto\alpha_s$ is continuous, and
\item[iii)] $\alpha_s$ is a closed $g_s$-geodesic in $M$.
\end{itemize}
\end{propo}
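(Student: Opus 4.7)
The plan is to construct $\alpha_s$ by following the $g_s$-geodesic flow from a continuously varying fixed point of the Poincar\'e map $P_s$, and then to rescale the time variable by the period $\tau_s$ in order to kill the obstruction to $C^\infty$-continuity explained in the remark above.

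First, pick $v_{s_0}\in|\gamma_{s_0}|$ and apply Lemma \ref{poincare} to obtain $\varepsilon>0$, a transversal section $\Sigma$, an open neighborhood $V\subset\Sigma$ containing $v_{s_0}$, and a continuous map $s\mapsto P_s\in C^\infty(V,\Sigma)$ on $B_\varepsilon(s_0)$. By construction $v_{s_0}$ is a fixed point of $P_{s_0}$. The next step is to show that this fixed point persists. The key input is that, in pinched negative curvature, every closed orbit of the geodesic flow is hyperbolic: the Jacobi equation along $\gamma_{s_0}$ admits only exponentially growing and decaying perpendicular solutions, so the spectrum of $DP_{s_0}(v_{s_0})$ is disjoint from the unit circle. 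In particular $DP_{s_0}(v_{s_0})-\mathrm{Id}$ is invertible, and the implicit function theorem, applied in local coordinates on $\Sigma$ to $F(s,v):=P_s(v)-v$, yields (after possibly shrinking $\varepsilon$) a unique continuous family of fixed points $s\mapsto v_s\in V$ of $P_s$ extending $v_{s_0}$. By the uniqueness clause of Proposition \ref{existence}, the orbit through $v_s$ must coincide with $|\gamma_s|$ for $s$ near $s_0$, so $\tau_s$ is its prime period and varies continuously by Corollary \ref{period}.

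Finally, write $G_s:\R\times SM\to SM$ for the $g_s$-geodesic flow on the sphere bundle and set
\[
\alpha_s(t):=\tau_M\bigl(G_s(\tau_s t,v_s)\bigr),\qquad t\in S^1=\R/\Z.
\]
Property (iii) is immediate, since $\alpha_s$ is by construction the projection of a periodic orbit of the $g_s$-geodesic flow, parametrized proportionally to $g_s$-arc length. For (ii), the assignment $s\mapsto\alpha_s\in C^\infty(S^1,M)$ is continuous because $v_s$ and $\tau_s$ depend continuously on $s$, $G_s(t,v)$ is smooth in $(t,v)$ and continuous in $s$, and --- crucially --- the factor $\tau_s$ in the time argument rescales all orbits to the common domain $S^1$, resolving exactly the mismatch highlighted in the remark above. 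For (i), continuity of $s\mapsto[\alpha_s]$ into the discrete set $[S^1,M]$ forces this map to be locally constant on $B_\varepsilon(s_0)$; since $[\alpha_{s_0}]=[c]$ by construction, $[\alpha_s]=[c]$ throughout $B_\varepsilon(s_0)$.

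The main obstacle is the persistence-of-fixed-point step, which rests on the hyperbolicity of closed geodesics in pinched negative curvature. Although this is a local, orbit-by-orbit property (the Anosov behavior of the Jacobi equation along any single closed geodesic), it has to be invoked explicitly here because the ambient manifold $M$ is noncompact and one cannot simply quote the standard compact Anosov theorem; all other ingredients --- the implicit function theorem, continuity of the flow in the metric parameter, and the reparametrization trick --- are routine once hyperbolicity is in hand.
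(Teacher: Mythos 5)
Your proposal is correct and follows essentially the same route as the paper: hyperbolicity of the closed orbit (via the Anosov/Jacobi-field behavior of pinched negatively curved geodesic flows) plus the implicit function theorem applied to $F(s,v)=P_s(v)-v$ gives a continuously varying fixed point of the Poincar\'e map, and the period $\tau_s$ is used to reparametrize all orbits over a common circle. The only cosmetic difference is that you rescale the time variable in the flow directly, whereas the paper rescales the geodesic vector fields by $\tau_s/\tau_{s_0}$ before integrating; the two devices are equivalent.
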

\begin{proof}
Let $s_0\in D^{k+1}$ and $\Sigma\subset SM$ be a section transversal
to $|\gamma_{s_0}|$ through $v_{s_0}:=\gamma_{s_0}(0)$. Let
$(V,\varphi)$
be a coordinate chart of $\Sigma$ around $v_{s_0}$ such that
$\varphi(v_{s_0})=0\in\R^{2n-2}$, $(n=dim M)$,
and such that the Poincar\'e
map $P_{s_0}$ associated to $\gamma_{s_0}$ is defined on $V$.

Set $U=\varphi(V)$ and
let $F:B_{\varepsilon}(s_0)\times U \to \R^{2n-2}$ be defined by 
\beq
F(s,x)=x- \varphi(P_{s}(\varphi^{-1}(x))).
\eeq
This map is smooth by Lemma \ref{poincare} and also
$F(s_0,0)=0$. Fixing $s_0$ and
differentiating respect to the remaining variables, we have:
\beq
DF(s_0,x)|_{x=0}=1-D\varphi(P_{s_0}(\varphi^{-1}(x)))|_{x=0}.
\eeq

Now, since the Riemannian metrics $g_s$ on $M$ are complete and pinched negatively curved, the $g_s$-geodesic flow is of Anosov type 
(\cite[Proposition 3.2]{Knieper}). This
implies that $|\gamma_{s_0}|$ is a hyperbolic periodic orbit.
Hence $DF(s_0,x)|_{x=0}$ is invertible. Consequently,
by the implicit function theorem, there exist $\varepsilon'>0$ and 
a smooth function $f:B_{\varepsilon'}(s_0)\to U$ such that
$F(s,f(s))=0$. To simplify the notation, let us take
 $\varepsilon'=\varepsilon$.

Let now
$\widetilde{\gamma}_s:\R\to SM$, $s\in B_{\varepsilon}(s_0)$ 
be the integral curve of 
of the $g_s$-geodesic field with initial condition
$\widetilde{\gamma}_s(0)=\varphi^{-1}(f(s))\in V\subset\Sigma$.
Thus, by the
theorem of continuous dependence of the solutions to ODEs on the
initial conditions and parameters (see for example \cite{sotomayor}), the map
\beq
B_{\varepsilon}(s_0)\to C^{\infty}(\R, SM)
\eeq
given by $s\mapsto\widetilde{\gamma}_s$ is continuous with respect to the
compact-open-$C^{\infty}$ topology on $C^{\infty}(\R, SM)$.

By Corollary \ref{period}, the 
prime periods $\tau_s$ of the orbits $\widetilde{\gamma}_s$
depend
continuously on $s$. We aim to reparametrize the periodic orbits 
$\widetilde{\gamma}_s$, $s\in B_{\varepsilon}(s_0)$, so that they all have the same
prime period, say $\tau$. To do this, denote by $\xi_s$ the
$g_s$-geodesic field on $SM$ and define a new family 
$\xi^*_s$ of vector
fields on $SM$ by
\beq
\xi^*_s=\frac{\tau_{s}}{\tau_{s_0}}\xi_s.
\eeq
Since both the period and the geodesic field vary continuously on 
$B_{\varepsilon}(s_0)$,
$\xi^*_s$ depends continuously on $s$ as well. It easy to verify that
the integral curves $\gamma^*_s$ of $\xi^*_s$ are just reparametrized
integral curves of the $g_s$-geodesic flow, namely
\beq
\gamma^*_s(t)=
\widetilde{\gamma}_s\left(\frac{\tau_{s}}{\tau_{s_0}}t\right)
\eeq
and that the prime period of $\gamma^*_s$ is $\tau:=\tau_{s_0}$.

Now let $S^1$ be the interval $[0,\tau]$ with the endpoints
identified. Therefore the map
\beq
B_{\varepsilon}(s_0)\to C^{\infty}(S^1, SM),
\eeq
which associates the periodic orbit $\gamma^*_s$ to the parameter
$s\in B_{\varepsilon}(s_0)$ is continuous. The curves $\alpha_s:=\tau_M\circ\gamma^*_s$ form the desired family of closed
$g_s$-geodesics.
\end{proof}
\begin{coro}\label{anosov orbits}
There exists a family of
parametrized closed smooth curves 
$\alpha_s:S^1\to M$, $s\in D^{k+1}$, such that 
\begin{itemize}
\item[i)] $[\alpha_s]=[c]$,
\item[ii)]the map $D^{k+1}\to C^{\infty}(S^1,M)$ given by
$s\mapsto\alpha_s$ is continuous, and
\item[iii)] $\alpha_s$ is a closed $g_s$-geodesic in $M$.
\end{itemize}
\end{coro}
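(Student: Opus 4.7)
The plan is to derive this corollary from Proposition \ref{local argument} by patching the locally constructed families into a global one, exploiting the compactness and contractibility of $D^{k+1}$.

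First, I would apply Proposition \ref{local argument} at every $s_0\in D^{k+1}$ and extract, by compactness, a finite subcover $\{B_1,\ldots,B_N\}$ of $D^{k+1}$ together with continuous local families $\alpha^{(i)}:B_i\to C^{\infty}(S^1,M)$ of closed $g_s$-geodesics in the class $[c]$. After a linear reparametrization in $t$, I may assume every $\alpha^{(i)}_s$ has common domain $S^1=\R/\Z$ and traces its image with constant speed; this preserves the continuity of $s\mapsto\alpha^{(i)}_s$ and the property of being (a reparametrization of) a closed $g_s$-geodesic.

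Next, by the uniqueness clause of Proposition \ref{existence}, the closed $g_s$-geodesic in the class $[c]$ is unique as a subset of $M$. Hence on each overlap $B_i\cap B_j$ the curves $\alpha^{(i)}_s$ and $\alpha^{(j)}_s$ parametrize the same image and so differ by a rotation of the circle: there is a continuous function $\theta_{ij}:B_i\cap B_j\to S^1$ with
\beq
\alpha^{(j)}_s(t)=\alpha^{(i)}_s(t+\theta_{ij}(s)).
\eeq
The collection $\{\theta_{ij}\}$ is a Cech $1$-cocycle with values in the sheaf of continuous $S^1$-valued functions, and since $D^{k+1}$ is contractible the associated principal $S^1$-bundle is trivial. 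Choosing continuous $\theta_i:B_i\to S^1$ with $\theta_j-\theta_i=\theta_{ij}$ on overlaps (concretely, by refining to a good cover and inductively lifting the relevant $S^1$-valued maps on contractible intersections to $\R$ via a partition of unity), the formula $\alpha_s(t):=\alpha^{(i)}_s(t+\theta_i(s))$ for $s\in B_i$ produces a well-defined continuous global map $D^{k+1}\to C^{\infty}(S^1,M)$, and properties (i)--(iii) are inherited directly from the corresponding properties of each $\alpha^{(i)}$.

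The main obstacle is precisely this parametrization ambiguity: the local families agree only up to rotation of $S^1$, so a naive pasting fails, and one must invoke the contractibility of $D^{k+1}$ to trivialize the $S^1$-valued transition cocycle (equivalently, to lift it to an $\R$-valued coboundary). Over a parameter space with nontrivial $H^1$, there would be a genuine cohomological obstruction to such a global choice of parametrization, even though the underlying family of unparametrized closed geodesics is globally defined by Proposition \ref{existence}.
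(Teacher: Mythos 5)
Your proposal is correct and is essentially the paper's argument in a different dress: the paper assembles the local families of Proposition \ref{local argument} into the circle bundle $\mathcal{E}=\{(s,v): v\in|\gamma_s|\}\to D^{k+1}$, whose local trivializations are exactly your local parametrizations, and trivializes it using contractibility of $D^{k+1}$, whereas you trivialize the same obstruction Čech-theoretically by killing the $S^1$-valued transition cocycle coming from the rotation ambiguity. Both proofs rest on the same three inputs (the local proposition, uniqueness of the closed geodesic in the class $[c]$, and contractibility of the disk), so no further comment is needed.
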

\begin{proof}
To obtain a globally defined family of closed geodesics we consider the
subspace $\mathcal{E}\subset D^{k+1}\times SM$ defined by
\beq
\mathcal{E}=\{(s,v)\in D^{k+1}\times SM|\ 
v\in |\gamma_s|\subset SM\},
\eeq
together with a projection $\pi:\mathcal{E}\to D^{k+1}$ onto the first factor, 
$\pi(s,v)=s$. 
Define $\Phi_{\varepsilon}:
B_{\varepsilon}(s_0)\times S^1\to\pi^{-1}(B_{\varepsilon}(s_0))$ by
\beq
\Phi_{\varepsilon}(s,t)=(s,\gamma_s^*(t)).
\eeq
Note that this map is a homeomorphism which defines a local trivialization
for $\pi:\mathcal{E}\to D^{k+1}$. Thus $\pi$ is a fiber bundle over a contractible
space $D^{k+1}$ so it must be a trivial bundle. Hence we can find a 
smooth map $\sigma:D^{k+1}\times S^1\to \mathcal{E}$, say 
$\sigma(s,t)=(s,\overline{\gamma}_s(t))$, with 
$|\overline{\gamma}_s|=|\gamma_s|$. This induces the desired continuous
family of closed geodesics, namely a continuous map 
$D^{k+1}\to C^{\infty}(S^1,M)$ given by 
$s\mapsto\alpha_s=\tau_M\circ\overline{\gamma}_s$, and such that
$[\alpha_s]=[c]$ for all $s\in D^{k+1}$.
\end{proof}
\section{Homotopy groups of $MET^{<0}_{\infty}(M,g)$}\label{components}
The proof of Theorem \ref{main1} follows closely the method
of proof of the Main Theorem of \cite{farrell2010}. We will sketch their proof here indicating 
how to adapt it to the case of noncompact manifolds under the conditions on the
curvature and behavior at infinity specified in the statement of Theorem \ref{main1}.

Recall that we are assuming $MET^{<0}(M)$ is nonempty. For any 
$g\in MET^{<0}(M)$ we will construct, for certain values
of $k$ depending on the
dimension of $M$, continuous maps
$S^{k}\to MET_{\infty}^{<0}(M)$ representing nontrivial classes
in $\pi_kMET_{\infty}^{<0}(M,g)$.

We begin by recalling the following facts related to the topology of
the group of stable diffeomorphisms of the circle. Denote by
$Diff(S^1\times S^{n-2}\times I,\partial)$ 
the space (with the $C^{\infty}$-topology)
of smooth diffeomorphisms of $S^1\times S^{n-2}\times I$
whose restriction to $S^1\times S^{n-2}\times\{0,1\}$
is the identity map.
$P^{\text{TOP}}(S^1\times S^{n-2})$ denotes the space of topological
pseudoisotopies of $S^1\times S^{n-2}$, i.e. it is the space of all
homeomorphisms of $S^1\times S^{n-2}\times I$ that fix 
$S^1\times S^{n-2}\times\{0\}$ pointwise.
\begin{thm}\label{diffeos}
Let $I$ denote closed interval and $S^m$ denote an $m$-dimensional sphere. Then
\begin{enumerate}
\item[1)] If $n>9$, $(\Z/2)^{\infty}\subseteq\pi_0 Diff(S^1\times S^{n-2}\times I,\partial)$.
\item[2)] If $n>13$, $(\Z/2)^{\infty}\subseteq\pi_1 Diff(S^1\times S^{n-2}\times I,\partial)$.
\item[3)] If $\frac{n-10}{2}>2p-4$, $(\Z/p)^{\infty}\subseteq\pi_{2p-4} Diff(S^1\times S^{n-2}\times I,\partial)$.
\end{enumerate}
Moreover the inclusion map 
$Diff(S^1\times S^{n-2}\times I,\partial)\hookrightarrow P^{\text{TOP}}(S^1\times S^{n-2})$
is $\pi_k$-injective for $n$ and $k$ as in $1)$, $2)$ and $3)$, when
restricted to the subgroups $(\Z/2)^{\infty}$, $(\Z/2)^{\infty}$ and $(\Z/p)^{\infty}$
respectively.
\end{thm}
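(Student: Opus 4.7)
My plan is to reduce the assertion to known computations in Waldhausen's algebraic $K$-theory of spaces via the smooth pseudoisotopy formalism, and then to handle the topological injectivity separately using Farrell--Jones.

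Let $M=S^1\times S^{n-2}$. First, I would relate $\text{Diff}(M\times I,\partial)$ to the smooth pseudoisotopy space $P^{\text{Diff}}(M)$ through the restriction fibration
$$\text{Diff}(M\times I,\partial)\longrightarrow P^{\text{Diff}}(M)\xrightarrow{\ r_1\ }\text{Diff}(M),$$
where $r_1$ restricts a pseudoisotopy to $M\times\{1\}$. It then suffices to exhibit classes in $\pi_k P^{\text{Diff}}(M)$ whose image in $\pi_k\text{Diff}(M)$ is trivial (or controllable); the abundance of isotopies coming from the rotational symmetry of the $S^1$-factor will be used to arrange these lifts.

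Second, I would invoke Igusa's stability theorem, by which the canonical map $P^{\text{Diff}}(M)\to\mathcal{P}^{\text{Diff}}(M)$ to the stable pseudoisotopy space is $k$-connected once $\dim M$ is sufficiently large in $k$; the numerical bounds $n>9$, $n>13$, and $(n-10)/2>2p-4$ are exactly what is needed for the desired homotopy groups to survive in the unstable range. Via Waldhausen's identification $\mathcal{P}^{\text{Diff}}(M)\simeq \Omega\,\mathrm{Wh}^{\text{Diff}}(M)$ and its expression through $A$-theory, the projection $M\to S^1$ together with a section produce a split summand matching the analogous space for the circle. Bass--Heller--Swan for $A(S^1)$ then supplies infinitely generated $\widetilde{\mathrm{Nil}}$-summands which, combined with Hatcher--Igusa higher torsion computations, yield the required copies of $(\Z/2)^{\infty}$ in $\pi_0$ and $\pi_1$, and of $(\Z/p)^{\infty}$ in $\pi_{2p-4}$ for odd primes $p$.

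Finally, for the $\pi_k$-injectivity of the inclusion into $P^{\text{TOP}}(M)$, I would appeal to the Farrell--Jones comparison between smooth and topological pseudoisotopy spaces, which on the Nil-generated classes identified above is injective in the relevant low degrees. The hardest step, and the real content of the theorem, is the third one: extracting the precise $(\Z/p)^{\infty}$ subgroups in the stated degrees from $A(S^1)$ requires delicate input from Hatcher's spectral sequence and Igusa's work on higher torsion, and verifying that these classes are neither killed under stabilization nor under the forgetful map to $P^{\text{TOP}}(M)$ is precisely where the stated dimension hypotheses and the Farrell--Jones injectivity enter in an essential way.
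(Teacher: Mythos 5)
The paper does not reprove this theorem: it defers to \cite[Section 4]{farrell2010}, and the accompanying remark names the essential mechanism, namely the \emph{canonical involution} on the pseudoisotopy space combined with Hatcher--Wagoner/Igusa (items 1 and 2) and Waldhausen--Grunewald (item 3). Your outline reproduces much of that circle of ideas (Igusa stability, the splitting of $A(S^1)$, comparison with $P^{\text{TOP}}$), but it is missing the step that actually puts the subgroups inside $\pi_k Diff(S^1\times S^{n-2}\times I,\partial)$. Your reduction via the restriction fibration $Diff(M\times I,\partial)\to P^{\text{Diff}}(M)\to Diff(M)$ only shows that a $(\Z/p)^{\infty}$ in $\pi_k P^{\text{Diff}}(M)$ killed in $\pi_k Diff(M)$ lies in the \emph{image} of $\pi_k Diff(M\times I,\partial)\to \pi_k P^{\text{Diff}}(M)$; being in the image exhibits $(\Z/p)^{\infty}$ as a quotient of a subgroup, not as a subgroup (for instance $\Z^{\infty}$ surjects onto $(\Z/p)^{\infty}$ while containing no torsion), so the containments asserted in 1)--3) do not follow. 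The appeal to ``rotational symmetry of the $S^1$-factor'' is not a substitute for the real tool: Hatcher's exact sequence/spectral sequence relating $\pi_k Diff(M\times I,\partial)$ to pseudoisotopy groups, whose first differential is $1\pm(\text{involution})$, together with knowledge of how the involution acts on the Hatcher--Wagoner/Igusa classes and on the two Nil summands. Without that, the core of the theorem is unproved.

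Two further points. First, the injectivity into $P^{\text{TOP}}(S^1\times S^{n-2})$ is not a ``Farrell--Jones comparison''; the standard argument (and the one underlying \cite{farrell2010}) is smoothing theory in the sense of Burghelea--Lashof: the homotopy fiber of $P^{\text{Diff}}(M)\to P^{\text{TOP}}(M)$ has finitely generated (indeed finite) homotopy groups in each degree because $\pi_i(\mathrm{Top}/O)$ is finite, so the kernel on $\pi_k$ meets an infinitely generated $(\Z/p)^{\infty}$ in a finite subgroup and one passes to a complementary $(\Z/p)^{\infty}$; your proposal asserts injectivity ``on the Nil-generated classes'' without this finiteness argument. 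Second, the infinite generation you need is not formal: Bass--Heller--Swan only produces the Nil summands of $A(S^1)$, while the statements that $\pi_0$, $\pi_1$ contain $(\Z/2)^{\infty}$ and that $\pi_{2p-4}$ of the Nil term contains $(\Z/p)^{\infty}$ are theorems of Hatcher--Wagoner/Igusa and of Waldhausen together with Grunewald \cite{grunewald08} (Igusa's \emph{higher torsion} theory is not the relevant input; his stability theorem and second-obstruction computation are). With the involution step supplied and these attributions corrected, your sketch would align with the proof the paper cites.
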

\begin{rmk}
The proof of this theorem (\cite[Section 4]{farrell2010})
follows from combining the natural involution in the
space of pseuodisotopies of a manifold with the results of Hatcher
\cite{hatcher1973} and Igusa \cite{igusa1982} (for item 1)), Igusa
\cite{igusa1982} (for item 2)), and Waldhausen \cite{waldhausen1978}
and \cite{grunewald08} (for item 3). 
\end{rmk}
\begin{proof}[of Theorem \ref{main1}]
We prove 1), 2) and 3) simultaneously. 
Recall that by Proposition \ref{existence} and Corollary \ref{normalbundle}, 
there exist a free homotopy class of loops $[c]\in[S^1,M]$ 
that can be represented by a closed
$g$-geodesic $\alpha:S^1\to (M,g)$ whose
normal bundle is trivial. Additionally,
suppose that the normal
injectivity radius of $\alpha$ is $2r$ so that we identify a tubular
neighborhood of 
$\alpha$ of radius $2r$, with $S^1\times\D^{n-1}_{2r}$,
where $2r$ denotes a closed
disk of radius $2r$. Let now 
$\varphi\in Diff(S^1\times S^{n-2}\times [r,2r],\partial)$.
Clearly, $\varphi$
can be extended to a self-diffeomorphism of $M$ by taking the identity
map outside
$S^1\times S^{n-2}\times [r,2r]\subset S^1\times \D_{2r}^{n-1}\subset M$. We
keep denoting this extension by $\varphi$. Now, on $M$ we define a
new Riemannian metric, denoted by $\varphi g$, by declaring the 
map $\varphi:(M,g)\to (M,\varphi g)$ and isometry. This new metric
is clearly and element in $MET^{<0}_{\infty}(M,g)$. Note that by
this procedure we have defined a map
\beq
\Phi:Diff(S^1\times S^{n-2}\times I,\partial)
\to
MET^{<0}_{\infty}(M,g),
\eeq
by $\Phi(\varphi)=\varphi g$.

If, according to Theorem \ref{diffeos},
$f:S^k\to Diff(S^1\times S^{n-2}\times I,\partial)$ represents a
nontrivial class corresponding to either
$(\Z/2)^{\infty}\subseteq\pi_0 Diff(S^1\times S^{n-2}\times I,\partial)$ if $n>9$,
or $(\Z/2)^{\infty}\subseteq\pi_1 Diff(S^1\times S^{n-2}\times I,\partial)$ if $n>13$,
or $(\Z/p)^{\infty}\subseteq\pi_{2p-4} Diff(S^1\times S^{n-2}\times I,\partial)$ if
$\frac{n-10}{2}>2p-4$;
then the proof of the next claim completes the proof of Theorem \ref{main1}.\\
\begin{claim}
The composite
\beq
S^k\xrightarrow{f} Diff(S^1\times S^{n-2}\times I,\partial)
\xrightarrow{\Phi}MET^{<0}_{\infty}(M,g)
\eeq
represents a non-trivial class in $\pi_kMET^{<0}_{\infty}(M,g)$.
\end{claim}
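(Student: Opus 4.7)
The plan is to argue by contradiction: I would use the continuous family of closed geodesics from Corollary \ref{anosov orbits} to convert a hypothetical null-homotopy of $\Phi\circ f$ into a null-homotopy of $f$ in the space $P^{\text{TOP}}(S^1\times S^{n-2})$ of topological pseudoisotopies, and then invoke the $\pi_k$-injectivity statement of Theorem \ref{diffeos} to reach a contradiction.

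Concretely, suppose $\Phi\circ f$ were null-homotopic. Then there would exist a continuous extension $G:D^{k+1}\to MET^{<0}_{\infty}(M,g)$ with $G|_{S^k}=\Phi\circ f$. Writing $g_s=G(s)$, continuity in the direct-limit topology forces all $g_s$ to agree with $g$ outside a fixed compact subset $K\subset M$ containing the tubular neighborhood $\Psi_0(S^1\times\D^{n-1}_{2r})$ of $\alpha$ coming from the setup. First I would apply Corollary \ref{anosov orbits} to the family $\{g_s\}_{s\in D^{k+1}}$ and the class $[c]$ to obtain a continuous family of closed $g_s$-geodesics $\alpha_s:S^1\to M$ with $[\alpha_s]=[c]$. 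For $s\in S^k$, because $f(s):(M,g)\to(M,g_s)$ is an isometry and $f(s)$ is the identity on $\alpha(S^1)\subset S^1\times\{0\}$, the uniqueness clause of Proposition \ref{existence} forces $\alpha_s=\alpha$ (any reparametrization being absorbed into the continuous family).

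Next I would continuously construct framed tubular neighborhoods of $\alpha_s$. The $g_s$-normal bundle $\nu_s$ of $\alpha_s$ depends continuously on $s$ and is trivializable at $s=s_0$ by Corollary \ref{normalbundle}; contractibility of $D^{k+1}$ then yields a continuous family of $g_s$-orthonormal framings, which over $S^k$ one can arrange to be the $f(s)$-pushforward of the original framing defining $\Psi_0$. Exponentiating along this framed normal bundle produces a continuous family of smooth embeddings $\Psi_s:S^1\times \D^{n-1}_{\rho}\hookrightarrow M$ for some uniform radius $\rho>0$, satisfying $\Psi_s=f(s)\circ\Psi_0$ for every $s\in S^k$.

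Finally, the comparison maps $h_s:=\Psi_0^{-1}\circ\Psi_s$, restricted to the annular subdomain modelled on $S^1\times S^{n-2}\times I$ and extended by the identity on the inner boundary, assemble into a continuous family of elements of $P^{\text{TOP}}(S^1\times S^{n-2})$ whose $S^k$-restriction is homotopic, through the inclusion $i:Diff(S^1\times S^{n-2}\times I,\partial)\hookrightarrow P^{\text{TOP}}(S^1\times S^{n-2})$, to $i\circ f$. Since this family extends continuously over $D^{k+1}$, the class $[i\circ f]$ is trivial in $\pi_k P^{\text{TOP}}(S^1\times S^{n-2})$; the $\pi_k$-injectivity of $i$ on the relevant subgroup asserted by Theorem \ref{diffeos} then forces $[f]=0$ in $\pi_k Diff(S^1\times S^{n-2}\times I,\partial)$, contradicting the choice of $f$. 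The principal obstacle is the coherent construction in the third paragraph: one must secure a uniform exponential radius $\rho$ over the compact parameter space $D^{k+1}$, make the framed normal bundles and resulting tubular embeddings vary continuously in the $C^{\infty}$-topology, and verify that the comparison family $\{h_s\}$ indeed represents $[i\circ f]$ on $S^k$ rather than a modification of it by some extraneous class.
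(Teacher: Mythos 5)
Your overall strategy (contradiction, continuous family of geodesics from Corollary \ref{anosov orbits}, then a $D^{k+1}$-family of pseudoisotopies contradicting the $\pi_k$-injectivity in Theorem \ref{diffeos}) matches the paper, but the step that actually produces the pseudoisotopies is where your argument breaks down. The comparison maps $h_s=\Psi_0^{-1}\circ\Psi_s$ built from tubular neighborhoods of a \emph{uniform finite radius} $\rho$ cannot detect $f$. For interior parameters $s$ the metrics $g_s$ are arbitrary elements of $MET^{<0}_{\infty}(M,g)$, so the only uniform normal injectivity radius you can hope to extract by compactness may be much smaller than $r$; but $f(s)$ is supported in the annulus $S^1\times S^{n-2}\times[r,2r]$ and is the identity on the open $r$-tube, where moreover $g_s=g$ for $s\in S^k$. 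Hence for any such small $\rho$ your family satisfies $h_s=\mathrm{id}$ for all $s\in S^k$: it represents the zero class, not $[i\circ f]$, and no contradiction results. The issue you flag as ``the principal obstacle'' is therefore not a technical verification but an outright failure, and it cannot be repaired by pushing the radius out to $2r$: nothing guarantees the $g_s$-normal exponential maps embed (or are even injective) out to radius $2r$ for interior $s$, and even where defined the image of $\Psi_s$ need not coincide with the image of $\Psi_0$, so $\Psi_0^{-1}\circ\Psi_s$ is not a self-map of $S^1\times S^{n-2}\times I$ at all.

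This is precisely why the paper (following Farrell--Ontaneda) does not work at finite radius. It passes to the infinite cyclic cover $\widehat{M}\cong S^1\times\R^{n-1}$, arranges $\widehat{\alpha}_s=\widehat{\alpha}$ via Theorem \ref{hofiber}, and sends $\widehat{g}_s$-geodesic rays emanating perpendicularly from $\widehat{\alpha}$ \emph{all the way to infinity}, compactifying $\widehat{M}$ by $\partial_{\infty}\widehat{M}$; the resulting maps $F_s$ are self-homeomorphisms of $S^1\times S^{n-2}\times[1,\infty]$ fixing the inner boundary, i.e.\ genuine topological pseudoisotopies, where the interval is the compactified radial direction. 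The extension to infinity is exactly where the hypothesis that all metrics agree off a compact set enters, via the quasi-isometry Lemma \ref{identity}; and the identification $F_s\sim f(s)$ for $s\in S^k$ is established by the asymptotic analysis of \cite[Section 3, Claims 5 and 6]{farrell2010}, not by an equality of tubular-neighborhood comparisons at finite radius. Without some substitute for this passage to infinity, your construction does not yield the required family in $P^{\text{TOP}}(S^1\times S^{n-2})$ restricting to (a homotope of) $f$ on $S^k$.
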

We prove the claim by contradiction. Assume that there exists a 
continuous map $D^{k+1}\to MET^{<0}_{\infty}(M,g)$,
$s\mapsto g_s$, that extends 
$\Phi\circ f$. We will construct a continuous map 
$F:D^{k+1}\to P^{\text{TOP}}(S^1\times S^{n-2})$ such that 
$F|_{S^k}=f$, contradicting the fact that $f$ was chosen to represent 
a non-trivial element in $\pi_kDiff(S^1\times S^{n-2}\times I,\partial)$ mapped
monomorphically into $\pi_kP^{\text{TOP}}(S^1\times S^{n-2})$.
Recall that by Corollary \ref{anosov orbits}, the $g_s$-geodesics
$\alpha_s:S^1\to M$ representing the class $[\alpha]=[c]\in [S^1,M]$,
can be assumed to depend continuously on the parameter $s$.

\par Let $p:\widehat{M}\to M$ be the covering space of 
$M$ corresponding to the infinite
cyclic subgroup of $\pi_1(M,\alpha(1))$ generated by $\alpha$. 
For each 
$s\in D^{k+1}$, $\widehat{M}$ can be given a Riemmanian metric $\hat{g}_s$ by
pulling-back along $p$, i.e.
$\hat{g}_s=p^*g_s$. The immersed geodesic
$\alpha$ has a lifting to an embedded closed geodesic 
$\widehat{\alpha}:S^1\to\widehat{M}$ in $\widehat{M}$.

Moreover, each metric $g_s$ can be lifted to $\widehat{M}$ and the
$g_s$-geodesics $\alpha_s$ have liftings to embedded 
$\hat{g}_s$-geodesics in
$\widehat{M}$ ($\hat{g}_s$ is the pullback of $g_s$). The metrics $\hat{g}_s$ clearly vary continuously with $s$, and in fact we \textit{assume} that $\widehat{\alpha}_s=\widehat{\alpha}$ for all $s\in D^{k+1}$. This
assumption can be made due to the following theorem proven in 
\cite[Lemma 1.4]{farrell2010}.
\begin{thm}\label{hofiber}
The homotopy fiber of the inclusion map 
$Emb(S^1,\widehat{M})\hookrightarrow C^{\infty}(S^1,\widehat{M})$ is
$(n-5)$-connected, where $Emb(S^1,\widehat{M})$ and 
$C^{\infty}(S^1,\widehat{M})$ are the spaces of smooth embeddings 
and smooth functions from $S^1$ to $\widehat{M}$ respectively.
\end{thm}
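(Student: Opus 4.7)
The plan is to translate the connectivity claim into a vanishing statement for relative homotopy groups and then resolve it by standard smooth transversality. I would first recall that the homotopy fiber of an inclusion $A\hookrightarrow B$ is $(n-5)$-connected if and only if the relative groups $\pi_k(B,A)$ vanish for every $k\leq n-4$. A class in $\pi_k(C^{\infty}(S^1,\widehat{M}),Emb(S^1,\widehat{M}))$ is represented by a smooth map $F\colon D^k\times S^1\to\widehat{M}$ whose restriction to $S^{k-1}\times S^1$ is a family of smooth embeddings of $S^1$, and the task reduces to exhibiting a smooth homotopy of $F$, rel a collar of $S^{k-1}\times S^1$, to a map $F'$ such that every loop $F'_s:=F'(s,\cdot)$ is an embedding.

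Next, I would encode the two failures of being an embedding as transversality conditions. Double points are detected by the auxiliary map
\[
\widehat{F}\colon D^k\times\bigl(S^1\times S^1\setminus\Delta_{S^1}\bigr)\longrightarrow\widehat{M}\times\widehat{M},\qquad \widehat{F}(s,x,y)=(F(s,x),F(s,y)),
\]
meeting the diagonal $\Delta_{\widehat{M}}\subset\widehat{M}\times\widehat{M}$, which has codimension $n$. Critical points are detected by the section $(s,x)\mapsto(dF_s)_x$ of the bundle $\mathrm{Hom}(TS^1,F^*T\widehat{M})\to D^k\times S^1$ meeting its zero section, also of codimension $n$. Applying Thom's transversality theorem in its relative form, keeping $F$ fixed on the collar of $S^{k-1}\times S^1$ where it is already a family of embeddings (so that both loci are already empty there), I may perturb $F$ within its smooth homotopy class so that both maps become transverse to the respective codimension-$n$ strata, with the perturbation supported away from the collar.

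A dimension count then closes the argument: the double-point locus has dimension $k+2-n$ and the critical-point locus has dimension $k+1-n$. For $k\leq n-4$ both numbers are strictly negative, so both loci are empty. Hence each $F'_s$ is a smooth injective immersion of the compact manifold $S^1$ into $\widehat{M}$, and therefore an embedding. The perturbation homotopy stays inside $C^{\infty}(S^1,\widehat{M})$ and keeps the boundary family inside $Emb(S^1,\widehat{M})$, providing the desired nullhomotopy of the relative class. This yields $\pi_k(C^{\infty},Emb)=0$ for all $k\leq n-4$, equivalently the $(n-5)$-connectedness of the homotopy fiber.

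The main technical point I expect to be the obstacle is carrying out the transversality genuinely rel boundary on the noncompact target $\widehat{M}$: the perturbation must be compactly supported away from $S^{k-1}\times S^1$, and the interpolating homotopy must be small enough (in the $C^\infty$-topology) that it stays through smooth maps of pairs. Compactness of the source $D^k\times S^1$ reduces this to a standard compact-support transversality argument, so no fundamentally new ideas are required beyond a careful relative version of Thom transversality for jets of families of loops.
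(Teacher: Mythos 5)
Your argument is essentially correct, but note how it sits relative to the paper: the paper does not prove Theorem \ref{hofiber} at all --- it quotes it from Farrell--Ontaneda \cite[Lemma 1.4]{farrell2010} --- so you are supplying a self-contained general-position proof where the text simply defers to a citation. Your reduction (hofiber $(n-5)$-connected $\Leftrightarrow$ $\pi_k\bigl(C^{\infty}(S^1,\widehat{M}),Emb(S^1,\widehat{M})\bigr)=0$ for $k\le n-4$) and the dimension counts are right, and in fact they give emptiness of both loci already for $k\le n-3$, i.e.\ your method proves the stronger $(n-4)$-connectivity, so the stated range is comfortably covered. Two technical points deserve to be made explicit rather than waved at. First, for the double-point locus you cannot invoke plain Thom transversality for $\widehat{F}$, because the admissible perturbations of $\widehat{F}$ are only the ``diagonal'' ones induced by perturbing $F$ itself; the correct tool is the multijet (two-point) transversality theorem of Mather (relative version, with the perturbation supported away from the collar where both loci are already empty), which does exactly what you need --- this is standard but it is the actual theorem being used, not Thom transversality for a single map. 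Second, a class in $\pi_k$ of these mapping spaces with the compact-open $C^{\infty}$-topology is a priori only a continuous family $D^k\to C^{\infty}(S^1,\widehat{M})$, not a jointly smooth map $D^k\times S^1\to\widehat{M}$; you need the routine smoothing/approximation step (using openness of the embedding condition in the $C^1$-topology on the compact source to keep the boundary family inside $Emb$) before transversality can be applied, and likewise a preliminary collar adjustment so that $F$ is a family of embeddings on a neighborhood of $S^{k-1}$, not just on $S^{k-1}$. With those standard insertions your proof is complete and, if anything, sharper than the statement being proved.
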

\begin{rmk}
In other words, every continuous $(k+1)$-parameter family of 
embeddings of $S^1$ into $\widehat{M}$ can be deformed
through smooth maps into a given embedding $\alpha$. Then the 
assumption becomes true when one redefines the metrics according to this
deformation (a pull-back along an isotopy). See 
\cite[Claim 1, p.292]{farrell2010} for more details. Note, though, that
in order to prove Theorem \ref{hofiber} it is necessary that the closed geodesics vary 
continuously as \textit{parametrized} maps, rather than just as point sets (orbits).
That is exactly what we achieved in Section \ref{preliminaries}.
\end{rmk}

If we lift the tubular neighborhood $S^1\times\D^{n-1}_{2r}$ to $\widehat{M}$
we obtain a countable number of connected components:
\beq 
p^{-1}(S^1\times\D^{n-1}_{2r})=\displaystyle\coprod_{i=0}^{\infty} C_i
\eeq
where exactly one of the $C_i$'s, say $C_0$, is homeomorphic to
$S^1\times\D^{n-1}_{2r}$
and $C_i$, $i\neq 0$ is homeomorphic to $\R\times\D^{n-1}_{2r}$. 

Additionally, we identify $\widehat{M}$ with $S^1\times\R^{n-1}$, via a 
homeomorphism $\mu:S^1\times\R^{n-1}\to\widehat{M}$ obtained 
by restricting the exponential map 
(respect to the metric $g$) to the normal bundle of $\alpha$. This is made in a way
that the liftings $\widehat{\alpha}_s$ of $\alpha_s$ to $\widehat{M}$ all coincide
with $S^1\times\{0\}\subset C_0$ and the rays 
$\{z\}\times\R^+v\subset S^1\times S^{n-2}\times (0,\infty)\subset S^1\times\R^{n-1}$, 
correspond to $\hat{g}$-geodesic rays in $\widehat{M}$ normal to $\widehat{\alpha}$.

Once these identifications have been made, we define a $(k+1)$-parameter
family of self homeomorphisms of $S^1\times\R^{n-1}$, 
$F_s:S^1\times\R^{n-1}\to S^1\times\R^{n-1}$ in the following way:

First notice that by \cite[Lemma 1.6, Claim 2]{farrell2010}, there is a small
$\delta>0$ and a closed neighborhood $W\subset\widehat{M}$ 
of $\widehat{\alpha}(S^1)$ (with boundary diffeomorphic to $S^1\times S^{n-2}$)
such that, for all $s\in D^{k+1}$ and all
$(z,v)\in S^1\times S^{n-2}\subset S^1\times\R^{n-1}$, there exists a unique
$\widehat{g}_s$-geodesic ray $w^s_{(z',v)}:[0,\infty)\to\widehat{M}$ emanating
$\widehat{g}_s$-perpendicularly from some point 
$z'\in\widehat{\alpha}(S^1)=\widehat{\alpha}_s(S^1)$ and intersecting
the boundary $\partial W$ of $W$ transversally 
only once at $\mu(z,\delta v)$. 
Thus we can define a $(k+1)$-parameter family of self-homeomorphisms of
$S^1\times S^{n-2}\times [\delta,\infty)$ by
\beq
F_s(z,v,t)=\mu^{-1}\left(w^s_{(z',v)}\left(\frac{\delta'}{\delta}\, t\right)\right),
\eeq
where $w^s_{(z',v)}(\delta')=\mu(z,\delta v)$. 

It follows from \cite[pag.293-294]{farrell2010} that this family 
is continuous on $s\in D^{k+1}$. The reason is that the map $F_s$ is
essentially a composition of a trivialization of the 
normal bundle of $\widehat{\alpha}$ with the normal
(to  $\widehat{\alpha}$) exponential map respect to $\widehat{g}_s$, and 
those two maps depend continuosly on the metric.

Figure 3 below helps visualizing this family of self-homeomorphisms.




In order to simplify the notation assume that $\delta=1$. The space 
$\widehat{M}$ can be compactified by adding points at infinity corresponding to asymptotic $g$-quasi-geodesics rays emanating
perpendicularly from $\widehat{\alpha}$. This boundary is denoted by
$\partial_{\infty}\widehat{M}$. (See \cite[Section 2]{farrell2010}.)

\begin{figure}[!h]
\vspace*{0cm}
    \begin{center}
    \includegraphics[scale=0.5]{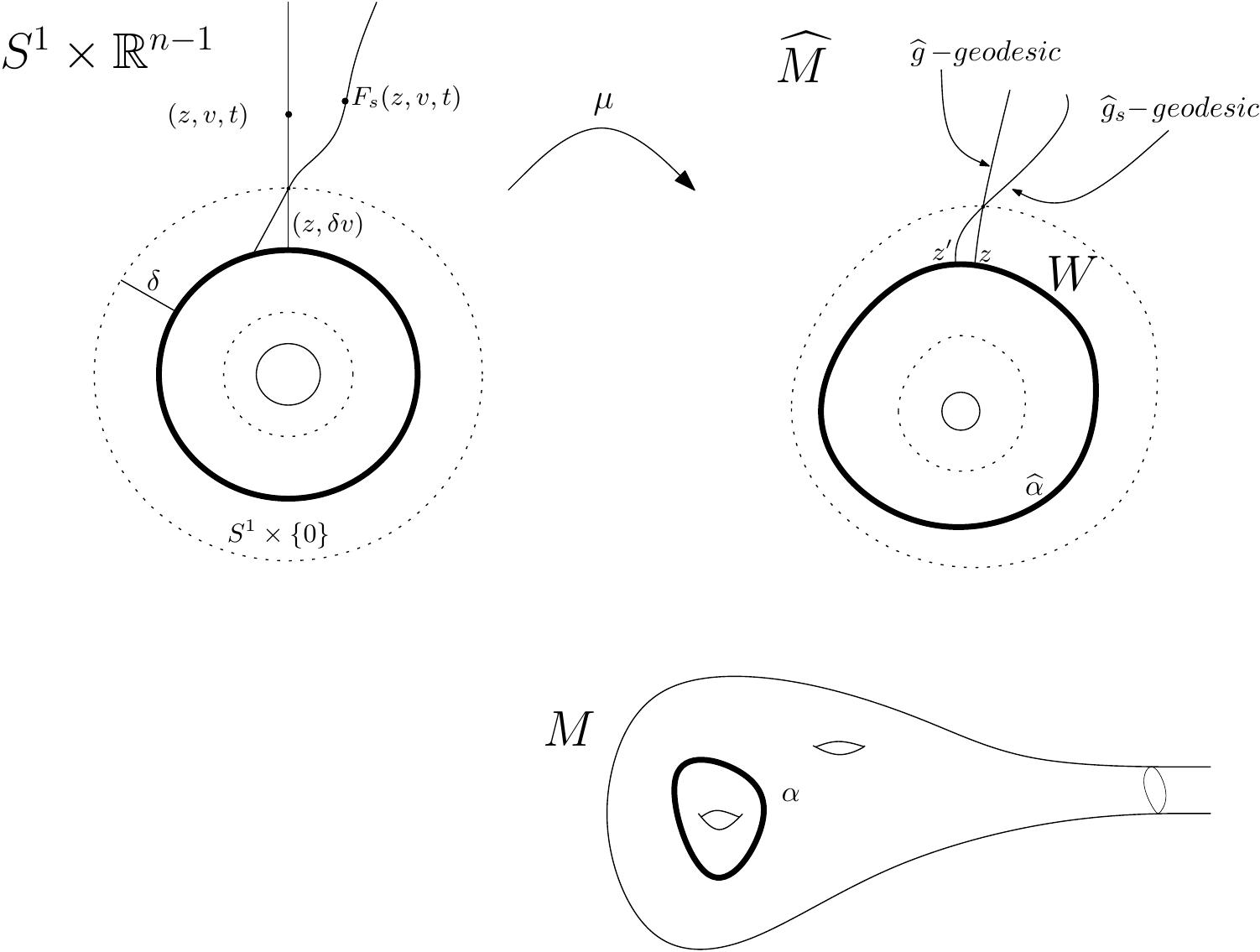}
    \caption{Self-homeomorphisms giving rise to an isotopy}
    \end{center}
\end{figure}

Thus we can extend the homeomorphisms $F_s$ to self-homeomorphisms
of $S^1\times S^{n-2}\times[1,\infty]$  by
\beq
F_s(z,v,\infty)=\mu^{-1}([w^s_{(z',v)}]),
\eeq
where $[w^s_{(z',v)}]\in\partial_{\infty}\widehat{M}$ denotes the class 
of the ray $w^s_{(z',v)}:[0,\infty)\to\widehat{M}$.
For this extension to be well defined 
we need to guarantee
that the $\widehat{g}_s$-geodesic rays emanating from $\widehat{\alpha}$ are, indeed,
$\widehat{g}$-quasi-geodesic rays. From the results of 
\cite[Section 2]{farrell2010}, it is enough to prove that the 
identity map 
$(\widetilde{M},\widetilde{g})\to (\widetilde{M},\widetilde{g}_s)$
is a quasi-isometry for all $s\in D^{k+1}$. 
This is what we show in the next lemma.

\begin{lemma}\label{identity}
Let $M$ be a smooth manifold such that $MET^{<0}(M)\neq\emptyset$. Let
$g\in MET^{<0}(M)$ and let $D^{k+1}\to MET^{<0}_{\infty}(M,g)$
be a continuous map.
If $\widetilde{g}_s$ denotes the pull-back metric of 
$g_s$ along the universal covering projection $p:\widetilde{M}\to M$, then
the identity map
\beq
id:(\widetilde{M},\widetilde{g})\to(\widetilde{M},\widetilde{g}_s)
\eeq
is a quasi-isometry for all $s\in D^{k+1}$, where $g_s$ is the image of
$s$ in $MET^{<0}_{\infty}(M,g)$.
\end{lemma}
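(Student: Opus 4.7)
The plan is to show the identity is actually bilipschitz, which is strictly stronger than being a quasi-isometry. The central reduction uses that $D^{k+1}$ is compact: the inclusions $MET^{<0}_{K_n}(M,g)\hookrightarrow MET^{<0}_{K_{n+1}}(M,g)$ are closed embeddings of Hausdorff spaces (equality with $g$ on the open set $M\setminus K_n$ is a closed condition under $C^\infty$-convergence), so by the standard fact about direct limits along such inclusions, every compact subset of $MET^{<0}_\infty(M,g)$ lies in some $MET^{<0}_{K_n}(M,g)$. Applied to the image of the continuous map $D^{k+1}\to MET^{<0}_\infty(M,g)$, this yields a single compact $K\subset M$ such that $g_s=g$ on $M\setminus K$ for every $s\in D^{k+1}$.

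Next, fix $s\in D^{k+1}$. On $K$ the continuous positive function $v\mapsto g_s(v,v)/g(v,v)$ on the compact unit $g$-tangent bundle over $K$ has positive minimum and finite maximum, so there exists $\lambda_s\geq 1$ with $\lambda_s^{-1}g\leq g_s\leq \lambda_s g$ on $K$; equality off $K$ extends this inequality to all of $M$. Pulling back along $p$ (a local isometry for both pairs $(g,\widetilde{g})$ and $(g_s,\widetilde{g}_s)$) gives $\lambda_s^{-1}\widetilde{g}\leq\widetilde{g}_s\leq\lambda_s\widetilde{g}$ pointwise on $\widetilde{M}$. For any piecewise smooth curve $\gamma$ this yields the length comparison $\lambda_s^{-1/2}\ell_{\widetilde{g}}(\gamma)\leq\ell_{\widetilde{g}_s}(\gamma)\leq\lambda_s^{1/2}\ell_{\widetilde{g}}(\gamma)$; infimizing over curves gives the analogous comparison of the induced distance functions. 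The identity is thus $\lambda_s^{1/2}$-bilipschitz on $\widetilde{M}$, hence a quasi-isometry.

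The only genuinely nontrivial step is the initial reduction to a single compact $K$: everything afterwards is linear algebra on tangent spaces together with the standard length-functional comparison, and does not invoke pinched negative curvature, completeness, or finite volume. This reduction is also where the hypothesis that the family $g_s$ lies in the direct limit $MET^{<0}_\infty(M,g)$, rather than merely in $MET^{<0}(M)$, is used essentially; without it, the compact exceptional sets could march off to infinity as $s$ varies and the conclusion would fail in general.
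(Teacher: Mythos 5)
Your proposal is correct and follows essentially the same route as the paper: use the direct-limit topology together with compactness of the image of $D^{k+1}$ to find one compact $K$ off which all $g_s$ agree with $g$, bound the ratio of the metrics over $K$ by compactness of the unit sphere bundle there, and conclude a bilipschitz (hence quasi-isometric) comparison of the lifted distance functions. Your pointwise tensor inequality $\lambda_s^{-1}\widetilde{g}\leq\widetilde{g}_s\leq\lambda_s\widetilde{g}$ on all of $\widetilde{M}$ is a slightly cleaner packaging of the paper's case analysis (paths over $K$ versus the isometric region), and the per-$s$ constant suffices for the statement as given.
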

\begin{proof}
By continuity, the subset 
$\{g_s|s\in D^{k+1}\}\subset MET^{<0}_{\infty}(M,g)$ is compact.
Since $MET^{<0}_{\infty}(M,g)$ is given
the direct limit topology, we can take a sufficiently large compact set
$K\subset M$ such that 
\beq
(M-K,g)\xrightarrow{id} (M-K,g_s)
\eeq 
is an isometry for
all $s\in D^{k+1}$. 

Let $S_gM$ be the unit tangent bundle of $M$ with respect to $g$ and let
$i:K\hookrightarrow M$ be the inclusion map into $M$.
Note that since $g_s$ is a 
$C^{\infty}$-continuously varying family of metrics, the map
\beq
D^{k+1}\times i^*(S_gM)\to (0,\infty)
\eeq
which sends a pair $(s,v)$ with $g(v,v)=1$ and $\tau_M(v)\in K$
to $g_s(v,v)$ is smooth. Here $i^*(S_gM)$ denotes the restiction of the sphere
bundle of $M$ to $K$. The domain of this function is compact, hence there
exist constants $a,b>0$ such that $a\leq g_s(v,v)\leq b$.
\par Now for $x,y\in p^{-1}(K)$, let $\gamma:[0,1]\to\widetilde{M}$ 
be a continuous
path in $\widetilde{M}$ with $\gamma(0)=x$ and $\gamma(1)=y$ realizing the 
$\tilde{g}$-distance between $x$ and $y$, which we denote by $d_{\tilde{g}}$.
Then $d_{\tilde{g}_{s}}(x,y)\leq\ell_{\tilde{g}_{s}}(\gamma)\leq
\sqrt{b}\, d_{\tilde{g}}(x,y)$, where
$\ell_{\tilde{g}_s}$ denotes the length function induced by
the Riemannian metric $\tilde{g}_s$.
Similarly one proves that $d_{\tilde{g}}(x,y)\leq\frac{1}{\sqrt{a}}\, 
d_{\tilde{g}_{s}}(x,y)$

As for points that do not project down to $K$, 
we only have to remark again that
the identity map $id:(\widetilde{M},\tilde{g})\to
(\widetilde{M},\tilde{g}_s)$ is an
isometry, when restricted to $p^{-1}(M-K)$.
\par If we let $\lambda=\text{max}\{\frac{1}{\sqrt{a}},\sqrt{b}\}$, 
it now follows that for all $x,y\in\widetilde{M}$ and 
$s\in D^{k+1}$
\beq
\frac{1}{\lambda}\, d_{\tilde{g}}(x,y)\leq d_{\tilde{g}_{s}}(x,y)\leq\lambda\, d_{\tilde{g}}(x,y).
\eeq
This completes the proof of the lemma. \\
\end{proof}
\begin{rmk}
Note that Lemma \ref{identity} is the only place where the control at
infinity is used. If no condition on the ends is imposed,
one may not
obtain that the identity map on the universal cover of $M$ is a
quasi-isometry, and the maps
$F_s$ don't seem to extend to the points at infinity in an obvious manner.
\end{rmk}
From this lemma it follows that the $(k+1)$-parameter family
of self-homeomorphisms $F_s:S^1\times S^{n-2}\times [1,\infty]$ is well
defined.
Moreover, in \cite[Section 3]{farrell2010}, it is proven that $F_s$ 
is indeed a continuously varying $(k+1)$-parameter family of 
self-homeomorphisms
of $S^1\times S^{n-2}\times [1,\infty]$; and one notices that $F_s$
restricts to the identity on $S^1\times S^{n-2}\times\{1\}$ for all $s$.
In other
words the map $D^{k+1}\to P^{\text{TOP}}(S^{1}\times S^{n-2})$ which sends
$s$ to $F_s$ is continuous. Finally, it is checked in 
\cite[Section 3, Claims 5 and 6]{farrell2010}
that $F_s\sim f(s)$ for $s\in S^k=\partial D^{k+1}$. This gives the
desired contradiction and proves the claim. \\
\end{proof}
\par The proof of Theorem \ref{main1} yields the following addendum.
\begin{add}\label{hyper}
Let $(M,h_0)$ be a hyperbolic manifold with finite volume and 
$dim\, M\geq 10$. Then $M$ admits a hyperbolic metric $h_1$ of finite
volume such that: \textit{i)} $h_1\in MET^{<0}_{\infty}(M,h_0)$, and
\textit{ii)}
$h_0$ can't be joined to $h_1$ by a continuous path of metrics in 
$MET^{<0}_{\infty}(M,h_0)$.
\end{add}
\begin{proof}
If $h_0$ is a hyperbolic metric on $M$ then for any 
$\varphi\in Diff(S^1\times S^{n-2}\times I)$ as in the proof of 
Theorem \ref{main1}, the hyperbolic metric
$h_1:=\varphi h_0$ clearly satisfies $i)$ and $ii)$.\\
\end{proof}
 \bibliographystyle{alpha} 
 {\footnotesize
 \bibliography{biblio}}     
\Addresses
\end{document}